%
%
%
%
\documentclass[12pt]{amsart}
\usepackage{amssymb,latexsym,amsmath,amscd,amsthm,graphicx, color}
\usepackage[all]{xy}
\usepackage{pgf,tikz}
\usepackage{mathrsfs}
\usetikzlibrary{arrows}
\definecolor{uuuuuu}{rgb}{0.26666666666666666,0.26666666666666666,0.26666666666666666}
\definecolor{xdxdff}{rgb}{0.49019607843137253,0.49019607843137253,1.}
\definecolor{ffqqqq}{rgb}{1.,0.,0.}

\raggedbottom

\pagestyle{empty}

\definecolor{uuuuuu}{rgb}{0.26666666666666666,0.26666666666666666,0.26666666666666666}
\definecolor{qqwuqq}{rgb}{0.,0.39215686274509803,0.}
\definecolor{zzttqq}{rgb}{0.6,0.2,0.}
\definecolor{xdxdff}{rgb}{0.49019607843137253,0.49019607843137253,1.}
\definecolor{qqqqff}{rgb}{0.,0.,1.}
\definecolor{cqcqcq}{rgb}{0.7529411764705882,0.7529411764705882,0.7529411764705882}
\definecolor{sqsqsq}{rgb}{0.12549019607843137,0.12549019607843137,0.12549019607843137}

\setlength{\oddsidemargin}{0 in} \setlength{\evensidemargin}{0 in}
\setlength{\textwidth}{6.75 in} \setlength{\topmargin}{-.6 in}
\setlength{\headheight}{.00 in} \setlength{\headsep}{.3 in }
\setlength{\textheight}{10 in} \setlength{\footskip}{0 in}

\theoremstyle{plain}
\newtheorem{example}[subsection]{Example}

\newtheorem{theorem}[subsection]{Theorem}

\newtheorem{lemma}[subsection]{Lemma}

\newtheorem{prop}[subsection]{Proposition}

\theoremstyle{definition}

\newtheorem{remark}[subsection]{Remark}

\newtheorem{note}[subsection]{Note}


\newcommand{\uu}{\cup}
\newcommand{\ii}{\cap}
\newcommand{\UU}{\bigcup}
\newcommand{\II}{\bigcap}

\newcommand{\sci}{\subset}

\newcommand{\es}{\emptyset}
\newcommand{\set}[1]{\{#1\}}


\newcommand{\ga}{\alpha}


\newcommand{\gn}{\nu}
\newcommand{\go}{\omega}

\newcommand{\gs}{\sigma}

\newcommand{\gG}{\Gamma}

\newcommand{\tit}{\textit}

\newcommand{\C}[1]{\mathcal{#1}}
\newcommand{\D}[1]{\mathbb{#1}}
\newcommand{\F}[1]{\mathfrak{#1}}

\newcommand{\te}{\text}

\newcommand{\ep}{\epsilon}

\newcommand{\ol}{\overline}
\newcommand{\ul}{\underline}

\begin{document}

\title{Quantization dimension and stability for infinite self-similar measures with respect to geometric mean error}

\author{Mrinal K. Roychowdhury}
\address{School of Mathematical and Statistical Sciences, University of Texas Rio Grande Valley, 1201 West University Drive, Edinburg, TX 78539-2999, USA}
\email{mrinal.roychowdhury@utrgv.edu}

\author{S. Verma}
\address{Department of Mathematics, IIT Delhi, New Delhi, India 110016 }
\email{saurabh331146@gmail.com}






\subjclass[2010]{28A33, 28A80, 60B05, 94A34.}
\keywords{Infinite IFS, self-similar measure, quantization dimension, Hausdorff dimension, stability.}

\thanks{The second author expresses his gratitude to the University Grants
   Commission (UGC), India, for financial support in the form of a research fellowship.
}
\date{}
\maketitle

\pagestyle{myheadings}\markboth{Mrinal K. Roychowdhury and S. Verma}{Quantization dimension and stability for infinite self-similar measures}

\begin{abstract}
Let $\mu$ be a Borel probability measure associated with an iterated function system consisting of a countably infinite number of contracting similarities and an infinite probability vector.
In this paper, we study the quantization dimension of the measure $\mu$ with respect to the geometric mean error. The quantization for infinite systems is different  from the well-known finite case investigated by Graf and Luschgy. That is, many tools which are used in the finite setting, for example, existence of finite maximal antichains, fail in the infinite case. We prove that the quantization dimension of the measure $\mu$ is equal to its Hausdorff dimension which extends a well-known result of Graf and Luschgy for the finite case to an infinite setting. In the last section, we discuss the stability of quantization dimension for infinite systems.

\end{abstract}


.

\section{Introduction}
The theory of quantization studies the process of approximating probability measures with discrete probability measures supported on a finite set.  This problem stems from the theory of signal processing and finds applications in many areas, for examples, economics, statistics, numerical integration (see \cite{BW, GG, GN, P, Z} for more details). Rigorous mathematical treatment of the quantization theory is given in Graf-Luschgy's book (see \cite{GL1}). The quantization error is often measured  with respect to $L_r$-metrics, where $0<r<+\infty$, and the order of convergence to zero of the quantization error has been investigated in details for different probability measures including invariant measures. Another complimentary approach to quantization is to investigate the order of convergence to zero of the geometric mean error. This is motivated by the fact that $L_r$-metric converges to the geometric mean as $r\to 0$. More precisely, for a given Borel probability measure $\mu$ on $\D R^k$, the \tit{$n$th quantization error} for $\mu$ with respect to the geometric mean error is defined by
\begin{equation}\label{eq00}
 e_n(\mu):=\inf \Big\{\exp \int \log d(x, \ga) d\mu(x) : \ga \sci \D R^k, \, 1\leq\te{Card}(\ga) \leq n\Big\},
\end{equation}
where $d(x, \ga)$ denotes the distance between $x$ and the set $\ga$ with respect to an arbitrary norm $d$ on $\D R^k$. The set $\ga \sci \D R^k$ for which the infimum in \eqref{eq00} is attained is called an \tit{optimal set of $n$-means}, and the collection of all optimal sets of $n$-means for $\mu$ is denoted by $\C C_n(\mu)$. Under some suitable restriction $e_n(\mu)$ tends to zero as $n$ tends to infinity. Let
\begin{equation*}
\hat e_n:=\hat e_n(\mu)=\log e_n(\mu)=\inf \Big\{\int \log d(x, \ga) d\mu(x) : \ga \sci \D R^k, \, 1\leq\te{Card}(\ga) \leq n\Big\}.
\end{equation*}
Then, the following quantities introduced in \cite{GL2}:
\[
\ul D(\mu):=\liminf_{n\to \infty}  \frac{\log n}{-\hat e_n(\mu)} \te{ and } \ol D(\mu):=\limsup_{n\to \infty} \frac{\log n}{-\hat e_n(\mu)},
\]
are called the \tit{lower} and the \tit{upper quantization dimensions} of $\mu$ (of order zero), respectively. If $\ul D (\mu)=\ol D (\mu)$, then the common value is called the quantization dimension of $\mu$ and is denoted by $D(\mu)$. The quantization dimension measures the speed at which the specified measure of the error tends to zero as $n$ tends to infinity. The quantization dimension with respect to the geometric mean error can be regarded as a limit state of that based on $L_r$-metrics as $r$ tends to zero (see \cite[Lemma 3.5]{GL2}). The proposition below plays an important role in estimating the lower and upper quantization dimensions.
\begin{prop} (see \cite[Proposition 4.3]{GL2}) \label{prop1}
Let $\ul D=\ul D(\mu)$ and $\ol D=\ol D(\mu)$.

$(a)$ If $0\leq t<\ul D<s$, then
\[
\lim_{n\to \infty} (\log n +t\hat e_n(\mu))=+\infty, \te{ and } \liminf_{n\to \infty} (\log n+s\hat e_n(\mu)) =-\infty.
\]

$(b)$ If $0\leq t<\ol D<s$, then
\[
\limsup_{n\to \infty} (\log n +t\hat e_n(\mu))=+\infty, \te{ and } \lim_{n\to \infty} (\log n+s\hat e_n(\mu)) =-\infty.
\]
\end{prop}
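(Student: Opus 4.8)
The plan is to reduce everything to an elementary fact about the $\liminf$ and $\limsup$ of a ratio of two sequences, one of which diverges to $+\infty$. Write $a_n:=\log n$ and $b_n:=-\hat e_n(\mu)$, so that by definition $\ul D=\liminf_{n\to\infty}a_n/b_n$ and $\ol D=\limsup_{n\to\infty}a_n/b_n$, while the quantity to be analyzed is $\log n+t\hat e_n(\mu)=a_n-t b_n$. The one preliminary input — the only place where anything about $\mu$ enters — is that $b_n\to+\infty$: since $e_n(\mu)\to 0$ under the standing assumptions, we have $\hat e_n(\mu)=\log e_n(\mu)\to-\infty$, so $b_n>0$ for all large $n$ and $b_n\to+\infty$. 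Each of the four assertions then follows by comparing $a_n-t b_n$ (or $a_n-s b_n$) with a fixed multiple of $b_n$ whose coefficient has a definite sign.

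For part $(a)$, suppose first $0\le t<\ul D$ and fix $t'$ with $t<t'<\ul D$. By the definition of $\liminf$ there is an $N$ with $a_n/b_n>t'$, i.e. $a_n>t'b_n$, for all $n\ge N$; since $b_n>0$ this gives
\[
a_n-t b_n>(t'-t)\,b_n\longrightarrow+\infty,
\]
so $\lim_{n\to\infty}(\log n+t\hat e_n(\mu))=+\infty$. For the second claim take $s>\ul D$ and fix $s'$ with $\ul D<s'<s$. Since $\liminf_n a_n/b_n=\ul D<s'$, there is a subsequence $(n_k)$ along which $a_{n_k}/b_{n_k}<s'$, whence
\[
a_{n_k}-s b_{n_k}<(s'-s)\,b_{n_k}\longrightarrow-\infty,
\]
using $s'-s<0$ and $b_{n_k}\to+\infty$; hence $\liminf_{n\to\infty}(\log n+s\hat e_n(\mu))=-\infty$.

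Part $(b)$ is the mirror image, obtained by interchanging the roles of $\liminf$ and $\limsup$ and swapping which inequality holds along a subsequence versus for all large $n$. If $0\le t<\ol D$, pick $t<t'<\ol D$; since $\limsup_n a_n/b_n=\ol D>t'$ there is a subsequence with $a_{n_k}>t'b_{n_k}$, giving $a_{n_k}-t b_{n_k}>(t'-t)b_{n_k}\to+\infty$ and thus $\limsup_{n\to\infty}(\log n+t\hat e_n(\mu))=+\infty$. If $s>\ol D$, pick $\ol D<s'<s$; then $a_n/b_n<s'$ for all large $n$, so $a_n-s b_n<(s'-s)b_n\to-\infty$ and $\lim_{n\to\infty}(\log n+s\hat e_n(\mu))=-\infty$.

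There is no genuine obstacle here: the argument is purely the standard translation between a dimension defined as a $\liminf$/$\limsup$ of a ratio and the divergence of an associated linear combination, and it is formally identical to the corresponding statement in \cite{GL2}. The only point requiring a word of care is the justification that $b_n=-\hat e_n(\mu)\to+\infty$ (equivalently $e_n(\mu)\to0$), which is exactly what lets the sign of $t'-t$ or $s'-s$ dictate divergence to $\pm\infty$; once that is secured, all four limits are immediate.
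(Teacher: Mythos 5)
Your proof is correct. The paper does not actually prove this proposition — it is imported verbatim from Graf--Luschgy \cite[Proposition 4.3]{GL2} — and your argument (writing $a_n=\log n$, $b_n=-\hat e_n(\mu)$, and letting the sign of $t'-t$ or $s'-s$ against $b_n\to+\infty$ drive the divergence) is essentially the standard proof from that source; the one external input you invoke, $e_n(\mu)\to 0$ (equivalently $b_n\to+\infty$), is indeed available here since all measures considered in the paper are compactly supported.
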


Let $M$ denote either the set $\set{1, 2, \cdots, N}$ for some positive integer $N\geq 2$, or the set $\D N$ of natural numbers. A collection $\set{S_j : j\in M}$ of similarity mappings on $\D R^k$ with similarity ratios $\set{s_j : j\in M}$ is contractive if $\sup\set{s_j : j\in M}<1$. If $J$ is the limit set of the iterated function system, then it is known that $J$ satisfies the following invariance relation (see \cite{H, MaU, M}):
\[J=\UU_{j\in M} S_j(J).\]
Note that the fractal limit set $J$ of the infinite iterated function system is not necessarily compact, in contrast to the finite case (see \cite{MaU, M}). The iterated function system $\set{S_j: j \in M}$ satisfies the \tit{open set condition} (OSC),  if there exists a bounded nonempty open set $U \sci \D R^k$ such that $S_j(U) \sci U$ for all $j\in M$, and $S_i(U) \II S_j(U) =\es$ for $i, j\in M$ with $i\neq j$. The iterated system is said to satisfy the \tit{strong open set condition} (SOSC) if there is an open set $U$ as above,  so that $U\ii J \neq \es$. The system is said to satisfy the \tit{strong separation condition} (SSC) if $S_i(J) \ii S_j(J)=\es$ for $i, j\in M$ with $i\neq j$, where $J$ is the limit set of the iterated function system $\set{S_j : j \in M}$. If $\set{S_j : j\in M}$ satisfies the strong separation condition then - as is easily seen - it also satisfies the strong open set condition. Let $\mu$ be the self-similar measure generated by a finite system of self-similar mappings $\set{S_1, S_2, \cdots, S_N}$ on $\D R^k$ associated with a probability vector $(p_1, p_2, \cdots, p_N)$. Then, $\mu$ has support the compact set $J$ that satisfy the following conditions:
\begin{equation*} \mu=\sum_{j=1}^N p_j \mu \circ S_j^{-1} \te{ and } J=\UU_{j=1}^N S_j(J).\end{equation*}

The finite case is fairly well understood by now. Graf and Luschgy in their seminal work on quantization, determined under the strong separation condition, the quantization dimension with respect to the geometric mean error of a self-similar measure generated by a finite system of self-similar mappings, and proved that the quantization dimension in this case coincides with the Hausdorff dimension of the measure (see \cite{GL2}). Under the same condition, Zhu determined the quantization dimension with respect to the geometric mean error of a self-conformal measure generated by a finite system of conformal mappings and proved that the quantization dimension in this case also coincides with the Hausdorff dimension of the measure (see \cite{Z1}). Recently, assuming the same separation property, the quantization dimension $D(\mu)$ of a recurrent self-similar measure $\mu$ with respect to the geometric mean error was determined, and it was proved that $D(\mu)$ coincides with the Hausdorff dimension  $\te{dim}_{\te{H}}^\ast(\mu)$ of $\mu$ (see \cite{RS1}). For other work in this direction one could also see \cite {RS2, Z2, Z3}. To determine the quantization dimension of a fractal probability measure in each of the aforementioned case the number of mappings considered in the iterated function system was finite. How to determine the quantization dimension with respect to the geometric mean error of a fractal probability measure generated by an infinite iterated function system was a long-time open problem. This work in this paper is the first advance in this direction and complements the results in \cite{MR2}, where the upper and lower quantization dimensions of order $r \in (0, +\infty)$ of a fractal probability measure generated by an infinite iterated function system were investigated.

Let  $\set{S_j : j\in \D N}$ be an infinite system of self-similar mappings on $\D R^k$ with contractive ratios $\set{s_j : j\in \D N}$ such that $\sup\set{s_j : j\in \D N}<1$. Let $(p_1, p_2, \cdots)$ be an infinite probability vector, with $p_j>0$ for all $j\geq 1$. Then, there exists a unique Borel probability measure $\mu$ supported on $\ol J$ 
(see \cite{H}, \cite{MaU}, \cite{M}, etc.), such that
\begin{equation*} \label{eq10} \mu=\sum_{j=1}^\infty p_j \mu \circ S_j^{-1}.\end{equation*} 
Under the strong open set condition it is known that if the series $\sum_{j=1}^\infty p_j\log s_j$ converges (see \cite{MR1}), then
\[\te{dim}_{\te{H}}^\ast(\mu)=\te{Dim}_{\te{P}}^\ast(\mu)=\frac{\sum_{j=1}^\infty p_j\log p_j}{\sum_{j=1}^\infty p_j\log s_j},\]
where $\te{dim}_{\te{H}}^\ast(\mu)$ and $\te{Dim}_{\te{P}}^\ast(\mu)$ represent the Hausdorff and packing dimensions of the measure $\mu$. In this paper, in Section~\ref{sec3}, under the strong separation condition, if the series $\sum_{j=1}^\infty p_j\log s_j$ converges, then we prove that the quantization dimension $D(\mu)$ of the infinite self-similar measure $\mu$ exists, and satisfies the following condition:
\[D(\mu)=\te{dim}_{\te{H}}^\ast(\mu)=\te{Dim}_{\te{P}}^\ast(\mu)=\frac{\sum_{j=1}^\infty p_j\log p_j}{\sum_{j=1}^\infty p_j\log s_j},\]
that is, the quantization dimension of the infinite self-similar measure $\mu$ coincides with the Hausdorff and packing dimensions of the measure $\mu$. In Section~\ref{sec4}, we show that the quantization dimension of an infinite self-similar measure continuously depends on the parameters such as the infinite probability vector $p=(p_1,p_2,\dots)$ and the infinite function system $\{S_j: j \in \mathbb{N}\}$ involved in its construction, which we term the \tit{stability of the quantization dimension}. To avoid the confusion, note that stability of quantization dimension was also determined by Kesseb\"ohmer and Zhu in \cite{KZ}, where they studied the so-called finite and countable stability of upper quantization dimension, but in our paper by the stability of the quantization dimension we mean how quantization dimension of a measure varies with the parameters defining it, and thus, our definition is different from that given by Kesseb\"ohmer and Zhu (see \cite{KZ}).

\section{Basic Preliminaries} \label{sec2}
Let $X$ be a nonempty compact subset of $\D R^k$ with $X=\te{cl(int} (X))$.
Let $(S_j)_{j=1}^\infty$ be an infinite set of contractive similarity mappings on $X$ with contraction ratios $(s_j)_{j=1}^\infty$, respectively, i.e., $d(S_j(x), S_j(y))=s_j d(x,y)$ for all $x, y \in X$, $0<s_j<1$, $j\geq 1$, and $$0<\inf_{j\ge 1} s_j \leq s:=\sup_{j\ge 1} s_j<1.$$
A \textit{word} with $n$ letters in $\D N =\{1, 2, \ldots\}$,  $\go:=\go_1\go_2\cdots \go_n \in \D N^n$, is said to have \textit{length} $n$, for $n\geq 1$. Define $\D N^{fin}:=\UU_{n\geq 1}\D N^n$ to be the set of all finite words with letters in $\D N$, of any length. For $\go=\go_1\go_2\cdots \go_n \in \D N^n$, define:
\[S_\go=S_{\go_1}\circ S_{\go_2}\circ \cdots \circ S_{\go_n} \te{ and } s_\go=s_{\go_1}s_{\go_2}\cdots s_{\go_n}.\] The empty word $\es$ is the only word of length $0$ and $S_\es=\te{Id}_X$, where $\te{Id}_X$ is the identity mapping on $X$.
For  $\go \in \D N^{fin}\uu \D N^\infty$ and for a positive integer $n$ smaller than the length of $\go$, we denote by $\go|_n$ the word $\go_1\go_2\cdots \go_n$. Notice  that given $\go \in \D N^\infty$, the compact sets $S_{\go|_n}(X)$, $n\geq 1$, are decreasing and their diameters converge to zero. In fact, we have
\begin{equation} \label{eq113} \te{diam}(S_{\go|_n}(X))=s_{\go_1}s_{\go_2}\cdots s_{\go_n}\te{diam}(X)\leq s^n \te{diam}(X). \end{equation}
Hence, for an infinite word $\omega$, the set $\pi(\go):=\II_{n=1}^\infty S_{\go|_n}(X)$
is a singleton, and we can define a map $\pi : \D N^\infty \to X$ which, in view of \eqref{eq113} is continuous. One obtains then the following limit set for the above infinite system of similarities:
\[J:=\pi(\D N^\infty)=\UU_{\go \in \D N^\infty}\II_{n= 1}^\infty S_{\go|_n}(X).\]
This fractal limit set $J$ is not necessarily compact in the infinite case, in contrast to the finite case (see \cite{MaU}).
 Let $\gs : \D N^\infty \to \D N^\infty$ be the shift map on $\D N^\infty$, i.e., $\gs(\go)=\go_2\go_3\cdots$, where $\go=\go_1\go_2\cdots$. Note that
$\pi\circ \gs(\go)=S_{\go_1}^{-1}\circ \pi(\go)$, and hence, rewriting $\pi(\go)=S_{\go_1}(\pi(\gs(\go)))$, we see that $J$ satisfies the invariance condition:
\[J=\UU_{j=1}^\infty S_j(J).\]
In the infinite case, the open set condition and the strong open set condition are not equivalent, unlike in the finite case (see \cite{SW}). In the current paper, we assume that the infinite set of similarities satisfies the \tit{strong separation condition} (SSC), i.e., $S_i(J)\ii S_j(J)=\es$ for $i, j\in \D N$ with $i\neq j$. It is a well-known fact that if an iterated function system satisfies the SSC, then it also satisfies the SOSC.
Let $p:=(p_1, p_2, \cdots)$ be an infinite probability vector, with $p_j>0$ for all $j\geq 1$. Then, there exists a unique Borel probability measure $\mu$ on $\D R^k$ (see \cite{H}, \cite{MaU}, \cite{M}, etc.), such that
\begin{equation*} \label{eq10} \mu=\sum_{j=1}^\infty p_j \mu \circ S_j^{-1}.\end{equation*}
This measure $\mu$ is  called the \tit{self-similar measure} induced by the infinite iterated function system of self-similar mappings $(S_j)_{j\geq 1}$ and by the infinite probability vector $(p_1, p_2, \cdots)$. Let $\gn$ denote the infinite-fold product probability measure on $\D N^\infty$, i.e., $\gn:= p\times p\times p\times \cdots$. Then, $\mu$ stands for the image measure of the measure $\gn$ under the coding map $\pi$, i.e., $\mu=\gn\circ \pi^{-1}$.
One defines the boundary at infinity $\C S(\infty)$ as the set of accumulation points of sequences of type $(S_{i_j}(x_{i_j}))_j$, for distinct integers $i_j$ (see \cite{MaU}).
The self-similar measure $\mu$ is supported in the closure $\ol J$ of the limit set $J$, which is given by $\ol J = J \cup \mathop{\cup}\limits_{\omega \in \D N^{fin}} S_\omega(\C S(\infty))$. In the sequel, we assume that the series $\sum_{j=1}^\infty p_j\log s_j$ converges, and then
\[\te{dim}_{\te{H}}^\ast(\mu)=\te{Dim}_{\te{P}}^\ast(\mu)=\frac{\sum_{j=1}^\infty p_j\log p_j}{\sum_{j=1}^\infty p_j\log s_j},\]
where $\te{dim}_{\te{H}}^\ast(\mu)$ and $\te{Dim}_{\te{P}}^\ast(\mu)$ represent the Hausdorff and packing dimensions of the measure $\mu$ (see \cite{MR1}).

Let us now give the following lemma.

\begin{lemma} \label{lemma4555}  Let $X$ be a nonempty compact subset of the metric space $\D R^k$. Then, there exists a  finite collection of contractive similarity mappings $T_1, T_2, \cdots, T_K$ on $X$ for some $K\geq 1$ such that $X \sci T_1(X)\uu T_2(X)\uu \cdots \uu T_K(X)$.
\end{lemma}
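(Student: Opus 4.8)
The plan is to use the standing hypothesis on $X$ imposed at the very start of Section~\ref{sec2}, under which this lemma is stated, namely $X=\mathrm{cl}(\mathrm{int}(X))$; since $X\neq\es$, this forces $\mathrm{int}(X)\neq\es$. First I would fix a point $x_0\in\mathrm{int}(X)$ and a radius $\rho>0$ for which the closed ball $\ol B(x_0,\rho)\ci X$. The guiding idea is that every similar copy of $X$ automatically contains the corresponding similar copy of this inscribed ball, so a covering of $X$ by small balls can be promoted to a covering of $X$ by similar copies of $X$.

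Next I would invoke compactness of $X$: being compact in $\D R^k$, $X$ is totally bounded, so for each prescribed radius $r>0$ there are finitely many points $y_1,\dots,y_K\in\D R^k$ with $X\ci\UU_{i=1}^K B(y_i,r)$. I would fix any ratio $s\in(0,1)$, set $r:=s\rho$, and take such points $y_1,\dots,y_K$ for this $r$. For each $i$ I would then define $T_i\colon\D R^k\to\D R^k$ by $T_i(z)=s\,(z-x_0)+y_i$. Each $T_i$ is a similarity of $\D R^k$ with ratio $s\in(0,1)$, hence contractive, and its restriction to $X$ is a contractive similarity mapping on $X$ in the sense of Section~\ref{sec2}, with $T_i(X)$ denoting the image of $X$. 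Because $\ol B(x_0,\rho)\ci X$, applying $T_i$ yields
\[
T_i(X)\ce T_i\big(\ol B(x_0,\rho)\big)=\ol B(y_i,s\rho)=\ol B(y_i,r)\ce B(y_i,r),
\]
and combining this with the ball covering gives $X\ci\UU_{i=1}^K B(y_i,r)\ci\UU_{i=1}^K T_i(X)$, which is precisely the asserted inclusion with $K\ge1$.

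The only genuinely delicate point, and where I expect the main obstacle to lie, is the step from ``cover $X$ by small balls'' (immediate from compactness) to ``cover $X$ by contractive similar copies of $X$'' (what the lemma actually demands): a priori an individual copy $T_i(X)$ need carry no prescribed ball at all. This is exactly what the inscribed ball resolves—transport the fixed ball $\ol B(x_0,\rho)\ci X$ by the same similarity $T_i$ that acts on $X$, and observe that enlarging the domain from the ball to all of $X$ only enlarges the image, so the covering by balls is inherited by the copies $T_i(X)$. I should emphasize that I make no attempt to also guarantee $T_i(X)\ci X$: the statement asks only for the covering $X\ci\UU_i T_i(X)$, and the inscribed-ball device supplies this directly, with all copies conveniently sharing the single contraction ratio $s$.
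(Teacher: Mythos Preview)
Your proof is correct and follows essentially the same strategy as the paper: use compactness to cover $X$ by finitely many pieces of small diameter, then cover each piece by a contractive similar copy of $X$. The paper's own argument is in fact sketchier at the one nontrivial point---it covers $X$ by relatively open sets $V_1,\dots,V_K$ with $\mathrm{diam}(V_i)<\mathrm{diam}(X)$ and then simply asserts that for each compact $\overline{V_j}$ one can find a contractive similarity $T_j$ with $\overline{V_j}\subseteq T_j(X)$, without saying why; your inscribed-ball device (fix $\overline{B}(x_0,\rho)\subseteq X$, transport it by $T_i$) is precisely what is needed to justify that step, and it makes explicit where the standing hypothesis $X=\mathrm{cl}(\mathrm{int}(X))$ from Section~\ref{sec2} enters.
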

\begin{proof}
Since $X$ is compact, there exist finite number of open subsets $V_1, V_2, \cdots, V_K$ of $X$ for some  $K\geq 1$, with respect to the relative topology on $X$, such that $X\sci V_1\uu V_2\uu \cdots \uu V_K \sci\ol V_1\uu \ol V_2\uu \cdots \uu\ol V_K$ and $\text{diam}(V_i)< \text{diam}(X)$ for every $i=1,2,\dots ,K.$
Note that to make the diameter of each $V_j$ for $1\leq j\leq K$ small enough, one can take $K$ large enough.
Again, $X$ being compact, each $\ol V_j$ is a compact subset of $X$, and so we can find contractive similarity mappings $T_j$ on $X$ such that
$\ol V_j\sci T_j(X)$ for all $1\leq j\leq K$,
and thus, the lemma is yielded.
\end{proof}

Let us now give the following remark.
\begin{remark} \label{rem11} Each $\ol V_i$ in the proof of the above lemma being compact,  each $\ol V_i$ has finite subcover, and by that way, if needed, one can make the diameter of each $\ol V_i$ small enough. We assume that there exists a positive integer $N$, possibly large, such that for some $1\leq n\leq K$, we have $S_j(X) \sci T_n(X)$ for all $j\geq N+1$.  Then, after some rearrangement in the mappings $T_i$ that appears in Lemma~\ref{lemma4555}, we can take $n=1$. To validate the assumption, if needed, we can also rearrange the infinite mappings $S_1, S_2, S_3, \cdots$.

\end{remark}

In the next sections, we state and prove the main results of the paper.

\section{Quantization dimension of the infinite self-similar measures} \label{sec3}

The following theorem gives the quantization dimension of the infinite self-similar measures with respect to the geometric mean error.
\begin{theorem} \label{main}
Let $\mu$ be the self-similar measure generated by the infinite iterated function system $\set{S_1, S_2, \cdots}$ satisfying the strong separation condition associated with the probability vector $(p_1, p_2, \cdots)$. Assume that the series $\sum_{i=1}^\infty p_i\log s_i$ converges, where $s_i$ are the similarity ratios of $S_i$ for all $i\geq 1$. Then,
\[D(\mu)=D,\]
where $D(\mu)$ is the quantization dimension and $D:=\te{dim}_{\te{H}}^\ast(\mu)$ is the Hausdorff dimension of the measure $\mu$.
\end{theorem}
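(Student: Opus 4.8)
The plan is to prove the two inequalities $\ol D(\gm)\le D$ and $\ul D(\gm)\ge D$ separately; since $\ul D(\gm)\le\ol D(\gm)$ always holds, together they force $\ul D(\gm)=\ol D(\gm)=D$. Each inequality is extracted from Proposition~\ref{prop1}: to obtain $\ol D(\gm)\le D$ it suffices to exhibit, for every $s>D$, codebooks witnessing $\lim_{n}(\log n+s\hat e_n(\gm))=-\infty$ (if $\ol D>s$, a subsequence gives $\log n_k+s\hat e_{n_k}>0$, a contradiction); to obtain $\ul D(\gm)\ge D$ it suffices to show, for every $t$ with $0\le t<D$, that $\lim_n(\log n+t\hat e_n(\gm))=+\infty$. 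Thus the whole problem reduces to sharp two-sided estimates of $\hat e_n(\gm)$, and the only real departure from the finite theory of Graf--Luschgy is the manufacture and counting of the pieces $S_\go(\ol J)$, since here no finite maximal antichain exists. (If $\sum_j p_j\log p_j=-\infty$, i.e. $D=\infty$, only the lower bound is needed, as it then yields $\ul D\ge t$ for every finite $t$; so I assume finite entropy below.)

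Writing $\gn=p\times p\times\cdots$ and $\gm=\gn\circ\pi^{-1}$, the natural device is the \emph{measure-balanced} antichain $\gG_\gh:=\setm{\go\in\D N^{fin}}{p_\go\le\gh<p_{\go^-}}$, where $\go^-$ is $\go$ with its last letter deleted; it is a maximal antichain with $\sum_{\go\in\gG_\gh}p_\go=1$, but it is now \emph{countably infinite}. For the upper estimate I would fix small $\gh,\gd>0$, choose a finite $F\ci\gG_\gh$ with $\sum_{\go\in F}p_\go\ge 1-\gd$, place one codebook point inside each $S_\go(\ol J)$ for $\go\in F$, and absorb the pieces of $\gG_\gh\sm F$ into a single (or $O(1)$-point) coarse approximation. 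Here Lemma~\ref{lemma4555} and Remark~\ref{rem11} are essential: because $\ol J\ci X$ is bounded and the tail images lie collectively in one contractive copy $T_1(X)$, the contribution of $\gG_\gh\sm F$ to $\int\log d(x,\ga)\,d\gm$ is at most $\gd\log\te{diam}(X)$ plus a constant, hence negligible after dividing by $-\hat e_n\to+\infty$. By self-similarity $\int_{S_\go(\ol J)}\log d(x,a_\go)\,d\gm\le p_\go(\log s_\go+C_0)$, with $C_0$ the one-point error of $\gm$, so $\hat e_{|F|+O(1)}(\gm)\le\sum_{\go\in F}p_\go\log s_\go+C_0+o(1)$. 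Since words in $\gG_\gh$ have length $O(\log(1/\gh))$ and, for $\gm$-typical $\go$, $\log p_\go\sim|\go|\sum_j p_j\log p_j$ while $\log s_\go\sim|\go|\sum_j p_j\log s_j$, the ratio of $\log|F|$ to $-\sum_{\go\in F}p_\go\log s_\go$ tends to $D=\frac{\sum_j p_j\log p_j}{\sum_j p_j\log s_j}$, giving $\ol D(\gm)\le D$ through Proposition~\ref{prop1}.

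For the lower estimate I would fix an arbitrary codebook $\ga$ with $\te{Card}(\ga)=n$ and use the strong separation condition to localize. By SSC the cylinders $S_\go(J)$, $\go\in\gG_\gh$, are pairwise separated, so a piece containing none of the $n$ points has all its mass at distance $\ge c\,s_\go$ from $\ga$, contributing $\ge p_\go(\log s_\go+c_1)$ for a separation constant $c_1$, while a piece carrying $k_\go\ge1$ points contributes $\ge p_\go(\log s_\go+\hat e_{k_\go}(\gm))$ after rescaling by $s_\go$. Summing, and noting at most $n$ pieces can be charged, produces a self-similar bound $\hat e_n(\gm)\ge\min_{\sum_\go k_\go\le n}\sum_{\go\in\gG_\gh}p_\go(\log s_\go+\hat e_{k_\go}(\gm))-C$; optimizing the allocation $k_\go$ so as to balance the per-piece errors and letting $\gh\to0$ recovers $\hat e_n(\gm)\ge-\tfrac{\log n}{t}(1+o(1))$ for every $t<D$, hence $\ul D(\gm)\ge D$. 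Two preliminary facts feed both halves: first, $\hat e_n(\gm)$ is finite, which needs an upper-regularity bound $\gm(B(x,r))\le Cr^{\gb}$ for some $\gb>0$ ensuring $\int\log d(x,\ga)\,d\gm>-\infty$, available since $\inf_j s_j>0$ and $D>0$; second, $\sum_j p_j\log s_j$ converges (indeed automatically, as $\log s_j\in[\log(\inf_j s_j),\log s]$ is bounded), so all symbolic averages above are finite.

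I expect the main obstacle to be exactly the control of the infinite tail of $\gG_\gh$, which has no finite-case analogue: the truncation $F$ (equivalently a cutoff alphabet $m=m(\gh)\to\infty$) must be chosen so that simultaneously (i) the discarded measure and discarded log-distortion are $o(1)$ uniformly as $\gh\to0$, using only $\sum_{j>m}p_j\to0$ together with the boundedness of $\log s_j$, and (ii) the number of retained pieces still satisfies $\log|F|=(1+o(1))\log(1/\gh)$, so the dimension count is not spoiled. Reconciling these—since for a fixed cutoff the product $\log(1/\gh)\cdot\sum_{j>m}p_j$ may diverge—is the delicate point, and it is precisely where $\inf_j s_j>0$, the compactness of $X$, and the convergence encoded in $D$ must be combined. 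In the lower bound the same tail reappears as the infinitely many empty pieces, whose aggregated error $\sum_{k_\go=0}p_\go\log s_\go$ must be shown to converge and to match the leading term.
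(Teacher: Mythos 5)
Your lower bound rests on a separation property that infinite systems never have. You claim that, by the SSC, a cylinder $S_\go(J)$ containing no point of the codebook $\ga$ has all of its mass at distance at least $c\,s_\go$ from $\ga$ for a uniform constant $c>0$, and, implicitly, that codebook points lying outside a cylinder can be disregarded when estimating the error inside it. Under the paper's standing assumptions ($X$ compact, $\inf_j s_j>0$) this is impossible: the first-level pieces $S_j(J)$, $j\in\D N$, are infinitely many pairwise disjoint sets whose diameters are bounded below by $(\inf_j s_j)\,\te{diam}(J)>0$, so by compactness of $X$ there are pairs $i\neq j$ with $d(S_i(J),S_j(J))$ arbitrarily small; hence no uniform relative separation constant exists, and a codebook point sitting in a neighbouring piece can lie within $o(s_\go)$ of your ``empty'' piece. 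For exactly this reason the self-similar recursion you are re-deriving --- the analogue of \cite[Lemma~5.10]{GL2} --- is available only for finite systems, where the finitely many pieces do have a positive minimal gap. The paper's Proposition~\ref{prop112} avoids the issue entirely by a limiting argument: it approximates $\mu$ by the finite-subsystem measures $\mu_N$ (first $N$ maps, renormalized probabilities), quotes the finite-system inequality $\hat e_n(\mu_N)\ge c_N-\frac{1}{t_N}\log n$ from the proof of \cite[Theorem~5.11]{GL2}, proves $\hat e_n(\mu_N)\to\hat e_n(\mu)$ for each fixed $n$ (Lemma~\ref{use234}, continuity of quantization errors under weak convergence), and then lets $N\to\infty$. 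Without either a uniform separation constant or such a limiting device, your lower bound does not go through.

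The upper bound has the gap you yourself flag as ``the delicate point,'' and it is not a removable technicality but the heart of the matter. Because $\inf_j p_j=0$, the words of your antichain $\gG_\gh$ admit no lower mass bound, so $\gG_\gh$ is infinite and, after truncating to $F$ with $\sum_{\go\in F}p_\go\ge 1-\gd$, you control neither $\te{Card}(F)$ in terms of $\gh$ nor the relation between $\log \te{Card}(F)$ and $-\sum_{\go\in F}p_\go\log s_\go$; the typical-word asymptotics you invoke also do not apply uniformly, since words of $\gG_\gh$ have lengths anywhere between $1$ and $O(\log(1/\gh))$. The paper closes this hole by using Remark~\ref{rem11} more aggressively than you do: not merely to absorb the discarded mass, but to replace the entire tail $\set{S_j : j>N}$ by the single similarity $T_1$, so that all antichains live over the finite alphabet $\set{1,\dots,N+1}$, where $\tilde p_{\min}>0$ forces $\tilde p_\go\ge\frac 1n$ for $\go\in F_n$ and hence $\te{Card}(F_n)\le n$; the entropy comparison is then supplied by the purely combinatorial induction of Lemma~\ref{lemma001} (no typicality needed), giving $\ol D(\mu)\le\tilde D_N$ for every $N$, and letting $N\to\infty$ finishes. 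In short, your outline identifies the right two inequalities and correctly locates the difficulties, but both of the steps that distinguish the infinite case from Graf--Luschgy remain unproved.
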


Let $I^\ast$ denote the set of all words of finite lengths over an alphabet $I$. For any positive integer $N\geq 2$, we call $\gG \sci \set{1, 2, \cdots, N}^\ast$ a \tit{finite maximal antichain} if $\gG$ is a finite set of words in $\set{1, 2, \cdots, N}^\ast$, such that every sequence in $\set{1, 2, \cdots, N}^{\D N}$ is an extension of some word in $\gG$, but no word of $\gG$ is an extension of another word in $\gG$.

To prove the above theorem, we need some lemmas and propositions.
\begin{lemma} \label{lemma001}
Let $\set{S_1, S_2, \cdots, S_{N}}$ be a finite system of contractive similarity mappings with similarity ratios $\set{s_1,  s_2, \cdots, s_{N}}$ associated with a probability vector $(q_1, q_2, \cdots, q_{N})$. Let $\gG\sci\set{1, 2, \cdots, N}^\ast$ be a finite maximal antichain. If $\tilde D$ is the Hausdorff dimension of the corresponding self-similar measure, then
\[\sum_{\gs \in \gG} q_\gs \log s_\gs \leq \frac 1{\tilde D} \sum_{\gs \in \gG} q_\gs \log q_\gs,\]
where $q_\gs=q_{\gs_1}q_{\gs_2}\cdots q_{\gs_k}$ for $\gs=\gs_1\gs_2\cdots\gs_k \in \set{1, 2, \cdots, N}^\ast$, $k\geq 1$, and $q_\es=1$ for the empty word $\es$.
\end{lemma}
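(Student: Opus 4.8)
The plan is to show that the asserted inequality is in fact an \emph{equality}, coming from a Wald-type additivity of the relevant sums over the maximal antichain $\gG$. Throughout write $a:=\sum_{j=1}^N q_j\log s_j$ and $b:=\sum_{j=1}^N q_j\log q_j$, so that $a,b<0$ and, by the dimension formula for self-similar measures under the SSC, $\tilde D=b/a$. Also set $A:=\sum_{\gs\in\gG}q_\gs\log s_\gs$ and $B:=\sum_{\gs\in\gG}q_\gs\log q_\gs$. First I would record the elementary fact that, since $\gG$ is a finite maximal antichain, every infinite sequence has a unique prefix in $\gG$, so the cylinders $\setm{[\gs]}{\gs\in\gG}$ partition $\D N^\infty$ (restricted to the alphabet $\set{1,\dots,N}$); in particular $\sum_{\gs\in\gG}q_\gs=1$, so $(q_\gs)_{\gs\in\gG}$ is itself a probability vector. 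Finally set $L:=\sum_{\gs\in\gG}q_\gs|\gs|$, the expected word length.

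The crux is the pair of identities
\[
A=a\,L\qq\te{and}\qq B=b\,L.
\]
I would prove these by induction on the antichain via the refinement operation: every finite maximal antichain is reachable from the trivial antichain $\set{\es}$ by finitely many steps, each of which replaces a single word $\gs$ of the current antichain by its $N$ immediate children $\gs1,\gs2,\dots,\gs N$ (the resulting set is again a finite maximal antichain, since the children are pairwise incomparable and extend only $\gs$). For the base case $\set{\es}$ one has $s_\es=q_\es=1$ and $|\es|=0$, so $A=B=L=0$ and both identities hold trivially. For the inductive step, using $\sum_{j}q_j=1$ together with $s_{\gs j}=s_\gs s_j$, $q_{\gs j}=q_\gs q_j$ and $|\gs j|=|\gs|+1$, a direct computation shows that refining at $\gs$ changes $A$, $B$, $L$ by exactly $+q_\gs a$, $+q_\gs b$, $+q_\gs$, respectively; hence $A=aL$ and $B=bL$ are preserved. (Equivalently, one may invoke Wald's equation for the bounded stopping time $\gt(\go)=|\gs|$, where $\gs\in\gG$ is the unique prefix of $\go$, applied to the i.i.d. sequences $\log s_{\go_i}$ and $\log q_{\go_i}$.)

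Combining the two identities gives
\[
\sum_{\gs\in\gG}q_\gs\log s_\gs=a\,L=\frac{a}{b}\,(b\,L)=\frac{1}{\tilde D}\sum_{\gs\in\gG}q_\gs\log q_\gs,
\]
so the relation in the lemma holds with equality, and in particular the stated inequality $\le$ follows.

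The only genuine work is the refinement induction itself, namely that every finite maximal antichain is obtained from $\set{\es}$ by finitely many child-expansions and that each expansion preserves the maximal-antichain property; this is routine (an induction on the number of internal nodes of the associated finite tree), but it is the one step that should not be skipped. Everything else is bookkeeping with the multiplicativity of $q_\gs$ and $s_\gs$. Alternatively, once one observes that $\gt$ is a bounded stopping time, the entire computation collapses to a single application of Wald's identity.
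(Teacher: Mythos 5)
Your proof is correct granted the one hypothesis you import --- that $\tilde D$ equals the ratio $b/a$, where $a=\sum_{j=1}^N q_j\log s_j$ and $b=\sum_{j=1}^N q_j\log q_j$ --- and with that hypothesis it takes a genuinely different, and in fact sharper, route than the paper. The paper argues by downward induction on $\ell(\gG)=\max\{|\gs| : \gs\in\gG\}$, collapsing all sibling families at the deepest level onto their parents and checking that $\frac 1{\tilde D}\sum_{\gs\in\gG}q_\gs\log q_\gs-\sum_{\gs\in\gG}q_\gs\log s_\gs$ does not increase under this collapse; your refinement induction is the mirror image of that step, with identical bookkeeping (an expansion at $\gs$ adds $q_\gs a$, $q_\gs b$, $q_\gs$ to your $A$, $B$, $L$), except that you retain exact identities $A=aL$, $B=bL$ (Wald's equation) where the paper retains only a one-sided estimate. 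Your version therefore shows the lemma holds with \emph{equality} when $\tilde D=b/a$, and it isolates the sensitive point: the asserted inequality is equivalent to $\tilde D\ge b/a$. Here is the caveat, and it affects both proofs equally. The lemma as stated imposes no separation condition, and for an overlapping finite system one only has the opposite bound $\tilde D\le b/a$ (this is \cite[Lemma~1]{GH}, and it is exactly what the paper cites); your identities then show that whenever $\tilde D<b/a$ strictly and $\gG\ne\{\es\}$,
\[
\frac 1{\tilde D}\sum_{\gs\in\gG}q_\gs\log q_\gs=\frac{b}{\tilde D}\,L<a\,L=\sum_{\gs\in\gG}q_\gs\log s_\gs,
\]
so the stated inequality actually \emph{fails}. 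Thus some hypothesis forcing $\tilde D=b/a$ (OSC/SSC, or simply defining $\tilde D$ as the ratio) is unavoidable, and importing it is not a defect peculiar to your argument: the paper's proof uses it tacitly, since the single ``$\geq$'' in its induction step amounts to $b\ge \tilde D a$, i.e. $\tilde D\ge b/a$, which is the reverse of the bound it cites from \cite{GH}, and its base case ``by the definition of $\tilde D$'' likewise presumes the ratio formula. Making the hypothesis explicit, as you do, is an improvement; it also signals that where the lemma is applied (Proposition~\ref{prop113}), with an auxiliary finite system that need not be separated, $\tilde D$ must be interpreted as the ratio $b/a$ rather than as a Hausdorff dimension for the argument to go through.
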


\begin{proof}
By \cite[Lemma~1]{GH} (see also \cite[Theorem~3.1]{RS2}), we deduce that $\tilde D \leq \frac{\sum_{i=1}^N q_i\log q_i}{\sum_{i=1}^N q_i\log s_i}$.
Write $\ell(\gG)=\max \set{|\gs| : \gs \in \gG}$. We will prove the lemma by induction on $\ell(\gG)$. If $\ell(\gG)=0$, we know $\gG=\set{\es}$, and so the lemma is obviously true.
 If $\ell(\gG)=1$, then the lemma follows by the definition of $\tilde D$. Next, let $\ell(\gG)=k+1$, and assume that the lemma has been proved for all finite maximal antichains $\gG'$ with $\ell(\gG')\leq k$ for some $k\geq 1$. Define
\begin{align*}
\gG_1&=\set{\gs \in \gG : |\gs| <\ell(\gG)}, \\
\gG_2&=\set{\gs^-  : \gs \in \gG \te{ and } |\gs| =\ell(\gG)},
\end{align*}
and \[\gG_0=\gG_1\UU \gG_2.\]
It is easy to see that $\gG_0$ is a finite maximal antichain with $\ell(\gG_0) \leq k$.  Then, we have
\begin{align*}
a:&=\frac 1 {\tilde D} \sum_{\gs \in \gG} q_\gs\log q_{\gs}-\sum_{\gs \in \gG} q_\gs\log s_{\gs} \\
&=\frac 1 {\tilde D}\Big (\sum_{\gs \in \gG_1} q_\gs\log q_{\gs}+\sum_{\gs \in \gG_2}\sum_{i=1}^N  q_\gs q_i(\log q_\gs+\log q_i)\Big)\\
&\qquad \qquad -\Big (\sum_{\gs \in \gG_1} q_\gs\log s_{\gs}+\sum_{\gs \in \gG_2}\sum_{i=1}^N  q_\gs q_i(\log s_\gs+\log s_i)\Big)\\
&=\frac 1 {\tilde D}\Big [\sum_{\gs \in \gG_1} q_\gs\log q_{\gs}+\sum_{\gs \in \gG_2} q_\gs\log q_\gs +\sum_{\gs \in\gG_2}q_\gs\Big(\sum_{i=1}^N q_i\log q_i\Big)\Big] \\
&\qquad \qquad -\Big [\sum_{\gs \in \gG_1} q_\gs\log s_{\gs}+\sum_{\gs \in \gG_2} q_\gs\log s_\gs +\sum_{\gs \in\gG_2}q_\gs\Big(\sum_{i=1}^N q_i\log s_i\Big)\Big] \\
& \geq \frac 1 {\tilde D}\Big[\sum_{\gs \in \gG_0} q_\gs\log q_{\gs} + \tilde D\sum_{\gs \in\gG_2}q_\gs\Big(\sum_{i=1}^N q_i\log s_i\Big)\Big]- \Big[\sum_{\gs \in \gG_0} q_\gs\log s_{\gs} + \sum_{\gs \in\gG_2}q_\gs\Big(\sum_{i=1}^N q_i\log s_i\Big) \Big]\\
& = \frac 1 {\tilde D}\sum_{\gs \in \gG_0} q_\gs\log q_{\gs}-\sum_{\gs \in \gG_0} q_\gs\log s_{\gs}.
\end{align*}
Since $\gG_0$ is a finite maximal antichain with $\ell(\gG_0)\leq k$, by the induction hypothesis, we have $a \geq 0$, and thus the lemma is proved.
\end{proof}

\begin{note} \label{note1}
We want now to approximate the infinite self-similar measure $\mu$ with discrete measures of finite support.
Let $\C P(X)$ denote the set of all Borel probability measures on the compact set $X \subset \mathbb R^k$. Then,
\begin{align*} d_H (\mu, \gn) :=\sup_{\te{Lip}(f)\leq 1} \Big\{\Big|\int_X f d\mu_1 -\int_X fd\mu_2\Big|\Big\}, \end{align*}
where  $ (\mu, \gn) \in \C M \times \C M$, defines a metric on $\C P(X)$, where $\te{Lip}(f)$ denotes the Lipschitz constant of $f$. Then
$(\C P(X), d_H)$ is a compact metric space (see \cite[Theorem~5.1]{B}).
Further, we know that in the weak topology on $\C P(X)$,
\[\mu_M \to \mu  \Longleftrightarrow \int_X f d\mu_M - \int_X f d\mu \to 0 \te{ for all } f \in \C C(X),\]
where $\C C (X) : =\set{ f : X \to \D R : f \te{ is continuous}}$. It is known that the $d_H$-topology and the weak topology coincide on the space of probabilities with compact support (see \cite{Mat}).  In our case all measures are compactly supported.
Since $X$ is compact, for any Borel probability measure $\gn$ on $X$, we have $\int\|x\|^rd\gn(x)<\infty$. For $r \in (0, +\infty)$ and for two arbitrary  probabilities $\mu_1, \mu_2$, the \tit{$L_r$-minimal metric}, also known as \tit{$L_r$-Wasserstein  metric}, or \tit{$L_r$-Kantorovich metric}, is defined  by the following formula (see for eg. \cite{GL1}):
\[\rho_r(\mu_1, \mu_2)=\inf_\gn \left(\int \|x-y\|^r d\nu(x, y)\right)^{\frac 1 r},\]
where the infimum is taken over all Borel probabilities $\nu$ on $\D R^k \times \D R^k$ with fixed \tit{marginal measures} $\mu_1$ and $\mu_2$, i.e., $\mu_1$ and $\mu_2$ are defined as follows: \[\mu_1(A)=\gn(A\times \D R^k), \te{ and } \mu_2(B)=\gn(\D R^k \times B),\]
for all $A,  B \in \F B$, where $\F B$ is the Borel $\gs$-algebra on $\D R^k$.
Note that the weak topology, the topology induced by $d_H$, and the topology induced by $L_r$-minimal metric $\rho_r$, \textit{all coincide} on the space $\C P(X)$ (see for example \cite{Ru}).
Let us also notice that, for $r=1$, the $\rho_1$ metric is in fact equal to the $d_H$ metric in the compact case, as shown by Kantorovich. For the background on $L_r$-minimal metrics one is refereed \cite{R, RR}.
\end{note}
As a prelude, we write the following lemma. Let us first define for $0<r<+\infty$ and a given Borel probability measure $\mu$ on $\D R^k$, the $n$th quantization error for $\mu$ with respect to the $L_r$-minimal metric by $$e_{n,r}(\mu):=\inf \Big\{\Big(\int d(x, \ga)^r d\mu(x)\Big)^{\frac{1}{r}}: \ga \subset \mathbb{R}^d, \, 1 \leq \text{Card}(\ga) \le n\Big\}.$$
\begin{lemma}[\cite{GL1}, Lemma $3.4$]\label{lem23}
Let $\mathcal{D}_n$ be the set of all discrete probability measures $\gn$ on $X$ such that $|supp(\gn)| \le n.$ Then, for any $\mu \in \mathcal{P}(X)$ we have
\[
e_{n,r}(\mu) = \inf_{\gn \in \mathcal{D}_n} \rho_r(\mu,\gn),
\]
where $\rho_r:\mathcal{P}(X) \times \mathcal{P}(X) \to \mathbb{R}$ is the $L_r$-minimal metric.
\end{lemma}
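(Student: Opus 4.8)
The plan is to establish the identity through two matching inequalities, passing back and forth between a finite set $\ga \sci \D R^k$ with $1\le \te{Card}(\ga)\le n$ and a discrete measure $\gn \in \C D_n$ via the nearest-point (Voronoi) construction. Throughout I write $d(x,\ga)=\min_{a\in\ga}\|x-a\|$, I recall that $\mu$ is supported on the compact set $X$, and I understand the reconstruction points of $\ga$ and the atoms of $\gn$ to range over the same ambient space, so that the passage $\ga \leftrightarrow \gn$ preserves membership in the two families.

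First I would prove $\inf_{\gn\in\C D_n}\rho_r(\mu,\gn)\le e_{n,r}(\mu)$. Fix $\ga=\set{a_1,\dots,a_m}$ with $m\le n$ and choose a Borel Voronoi partition $X=V_1\uu\cdots\uu V_m$ subordinate to $\ga$, so that $\|x-a_i\|=d(x,\ga)$ whenever $x\in V_i$. Set $\gn=\sum_{i=1}^m \mu(V_i)\,\gd_{a_i}$, which is a discrete probability measure with at most $n$ atoms, hence $\gn\in\C D_n$. The nearest-point map $T:X\to\ga$ sending $V_i$ to $a_i$ pushes $\mu$ forward to $\gn$, so the image of $\mu$ under $x\mapsto (x,T(x))$ is a coupling $\gl$ of $\mu$ and $\gn$, and therefore
\[
\rho_r(\mu,\gn)^r\le \int \|x-y\|^r\,d\gl(x,y)=\int \|x-T(x)\|^r\,d\mu(x)=\int d(x,\ga)^r\,d\mu(x).
\]
Taking the infimum over all admissible $\ga$ yields the stated inequality.

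Next I would prove the reverse inequality $e_{n,r}(\mu)\le \inf_{\gn\in\C D_n}\rho_r(\mu,\gn)$. Fix $\gn\in\C D_n$ and put $\ga=\te{supp}(\gn)$, so that $\te{Card}(\ga)\le n$. For any coupling $\gl$ of $\mu$ and $\gn$, the second marginal is concentrated on $\ga$, hence $y\in\ga$ for $\gl$-almost every $(x,y)$ and consequently $\|x-y\|\ge d(x,\ga)$. Since the first marginal of $\gl$ equals $\mu$, we get
\[
\int \|x-y\|^r\,d\gl(x,y)\ge \int d(x,\ga)^r\,d\gl(x,y)=\int d(x,\ga)^r\,d\mu(x)\ge e_{n,r}(\mu)^r.
\]
Taking the infimum over couplings $\gl$ and then over $\gn\in\C D_n$ gives the inequality, and combining the two bounds proves the lemma.

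The main technical obstacle is the Borel measurability underlying the first inequality: one must select the Voronoi cells $V_i$, equivalently a nearest-point selection $T$, to be Borel measurable, so that the weights $\mu(V_i)$ and the push-forward coupling $\gl$ are well defined and genuinely couple $\mu$ with $\gn$. I expect to handle this by the standard measurable-selection argument for the nearest-point map of a \emph{finite} set: the sets $\set{x:\|x-a_i\|\le \|x-a_j\|\ \te{for all}\ j}$ are closed, and breaking ties by the fixed ordering of the indices $i$ turns $X$ into finitely many disjoint Borel pieces on which $T$ is constant, so $T$ is Borel. Finiteness of $\ga$ is exactly what makes this work and is the feature that fails in the infinite setting discussed elsewhere in the paper; the remaining verifications (that $\gl$ has the correct marginals, and that the infima are attained or approximated as claimed) are routine.
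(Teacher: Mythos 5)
Your proof is correct and is essentially the argument behind the result being quoted: the paper offers no proof of this lemma at all, citing it as Lemma~3.4 of \cite{GL1}, and the proof given there is exactly your pair of inequalities --- push $\mu$ forward through a Borel nearest-point map to couple it with the Voronoi-weighted discrete measure for one direction, and observe that under any coupling of $\mu$ with $\nu \in \mathcal{D}_n$ one has $\|x-y\| \ge d(x, \mathrm{supp}(\nu))$ almost everywhere for the other. One substantive remark: your opening convention that the codebook $\alpha$ and the atoms of $\nu$ range over the same space is not a formality but is needed for the identity to be literally true, since as written in the paper $e_{n,r}$ is an infimum over $\alpha \subset \mathbb{R}^k$ while $\mathcal{D}_n$ consists of measures supported in $X$, and for non-convex $X$ these two infima can genuinely differ (take $X$ an annulus, $\mu$ normalized Lebesgue measure on $X$, $n=1$, $r=2$: the optimal single codebook point is the center of the annulus, which lies outside $X$); your reading, with both infima over the same ambient set, is the correct one and matches the formulation in \cite{GL1}.
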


In the next lemma, we show the continuity of quantization errors $e_{n,r}$ and $e_n.$ In particular, we have
\begin{lemma}\label{use234}
Let $0<r< +\infty,$ and $\mu_N \to \mu $ in the weak topology. Then, for every $n \in \mathbb{N},$ we have
\[
\lim_{N \to \infty} e_{n,r}(\mu_N) =e_{n,r}(\mu) \quad \te{and} \quad \lim_{N \to \infty} e_{n}(\mu_N) =e_{n}(\mu).
\]
\end{lemma}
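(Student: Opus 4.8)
The plan is to treat the two statements separately, handling the $L_r$-error by a clean distance-to-set argument and the geometric-mean error $e_n$ by a squeezing argument that reduces to the $L_r$-case in one direction and to a direct minimizer argument in the other.

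For the $L_r$-error I would invoke Lemma~\ref{lem23}, which identifies $e_{n,r}(\lambda)=\inf_{\nu\in\mathcal{D}_n}\rho_r(\lambda,\nu)$ as the $\rho_r$-distance from $\lambda$ to the \emph{fixed} set $\mathcal{D}_n\subset\mathcal{P}(X)$. Since the distance to a fixed set in a metric space is $1$-Lipschitz, the triangle inequality gives $|e_{n,r}(\mu_N)-e_{n,r}(\mu)|\le\rho_r(\mu_N,\mu)$. By Note~\ref{note1} the metric $\rho_r$ induces the weak topology on $\mathcal{P}(X)$, so $\mu_N\to\mu$ weakly forces $\rho_r(\mu_N,\mu)\to0$, and the first limit follows at once.

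For $e_n$ the starting observation is the power-mean monotonicity: for each fixed $\lambda$ the map $r\mapsto e_{n,r}(\lambda)$ is non-decreasing and $e_{n,r}(\lambda)\downarrow e_n(\lambda)$ as $r\downarrow0$ (see \cite[Lemma~3.5]{GL2}); in particular $e_n(\lambda)\le e_{n,r}(\lambda)$ for every $r>0$. The upper estimate is then immediate: for each fixed $r>0$, $\limsup_N e_n(\mu_N)\le\limsup_N e_{n,r}(\mu_N)=e_{n,r}(\mu)$ by the $L_r$-part, and letting $r\downarrow0$ yields $\limsup_N e_n(\mu_N)\le\inf_{r>0}e_{n,r}(\mu)=e_n(\mu)$. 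For the lower estimate I would pass to a subsequence realizing $L:=\liminf_N e_n(\mu_N)$, choose for each $N$ a near-optimal quantizer $\alpha_N$ with $|\alpha_N|\le n$ and, after projecting its points onto the compact convex hull $\widehat X$ of $X$ (which does not increase $d(x,\cdot)$ on $X$), assume $\alpha_N\subset\widehat X$; then extract, using compactness of the family of $\le n$-point subsets of $\widehat X$ in the Hausdorff metric, a limit $\alpha_N\to\alpha$ with $|\alpha|\le n$. It remains to prove $\liminf_N\int\log d(x,\alpha_N)\,d\mu_N\ge\int\log d(x,\alpha)\,d\mu$, since this gives $L\ge\exp\int\log d(x,\alpha)\,d\mu\ge e_n(\mu)$.

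The main obstacle lies precisely in this last inequality. Away from $\alpha$ the integrand $\log d(\cdot,\alpha_N)$ converges uniformly and boundedly to $\log d(\cdot,\alpha)$, so on each region $\{d(x,\alpha)\ge\epsilon\}$ the contribution passes to the limit by weak convergence after a continuous truncation (Portmanteau). The delicate part is the region where $d(x,\alpha_N)$ is small, where $\log d(\cdot,\alpha_N)$ is large and negative; controlling it requires uniform integrability of the negative parts $(\log d(\cdot,\alpha_N))^{-}$ against $\mu_N$ near the quantizers. I expect this to be the crux: for a completely arbitrary weakly convergent sequence the estimate can degenerate (if $\mu_N$ concentrates mass at a quantizer point then $e_n(\mu_N)$ collapses to $0$), so the lower bound must exploit that the measures in play do not place mass at the limiting quantizer $\alpha$. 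In the present setting the $\mu_N$ are non-atomic self-similar measures with uniformly controlled local scaling under the SSC, which supplies exactly the uniform integrability needed to close the estimate and finish the proof.
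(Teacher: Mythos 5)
Your treatment of the first limit is essentially the paper's own: both of you use Lemma~\ref{lem23} to write $e_{n,r}(\cdot)=\inf_{\nu\in\mathcal{D}_n}\rho_r(\cdot,\nu)$ as a distance to the fixed set $\mathcal{D}_n$, deduce the $1$-Lipschitz bound $|e_{n,r}(\mu_N)-e_{n,r}(\mu)|\le\rho_r(\mu_N,\mu)$ from the triangle inequality, and finish by metrizability of the weak topology. (A shared caveat: with the exponent $1/r$ in its definition, $\rho_r$ satisfies the triangle inequality only for $r\ge 1$; for $0<r<1$ one should argue with $\rho_r^r$, which is a metric, and this still yields the first claim.) For the second limit you genuinely diverge from the paper, which never touches quantizers: it notes $\rho_r\le\rho_1$ for $0<r<1$, so that the Lipschitz bound becomes uniform in $r$, i.e. $|e_{n,r}(\mu_N)-e_{n,r}(\mu)|\le\rho_1(\mu_N,\mu)$ for all such $r$, and then lets $r\to 0$ via \cite[Lemma~3.5(b)]{GL2} to get $|e_n(\mu_N)-e_n(\mu)|\le\rho_1(\mu_N,\mu)\to 0$. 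Your upper estimate $\limsup_N e_n(\mu_N)\le e_n(\mu)$ is correct and complete, but your lower estimate is only a program: you reduce it to uniform integrability of $(\log d(\cdot,\alpha_N))^-$ against $\mu_N$ and then close by invoking non-atomicity, self-similarity and the SSC --- hypotheses that are nowhere in the lemma, which is stated for an arbitrary weakly convergent sequence in $\mathcal{P}(X)$. As a proof of the stated lemma, that is a genuine gap.

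However, the obstruction you isolated is real, and in fact fatal to the statement itself. Take $X=[0,1]$, $\lambda$ Lebesgue measure on $X$, and $\mu_N=(1-\frac1N)\lambda+\frac1N\delta_{1/2}$. Then $\mu_N\to\lambda$ weakly, but putting one quantizer point at the atom gives $\int\log d(x,\alpha)\,d\mu_N=-\infty$, hence $e_n(\mu_N)=0$ for every $N$ and $n$, while $e_n(\lambda)>0$; so the $e_n$-part of the lemma fails without extra hypotheses, and no argument --- yours or the paper's --- can be completed as stated. This also pinpoints the flaw in the paper's short proof: for $0<r<1$ the bound $|e_{n,r}(\mu_N)-e_{n,r}(\mu)|\le\rho_r(\mu_N,\mu)$ rests on the triangle inequality for $\rho_r$, which fails for the $1/r$-power definition, so the claimed uniform-in-$r$ inequality is not available --- and the example shows it cannot be, since its $r\to 0$ limit is contradicted. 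What the lemma actually needs, and what its application to Proposition~\ref{prop112} can supply, is precisely the condition you identified: a uniform integrability (anti-concentration) bound for $\log d(\cdot,\alpha)$ near finite sets, valid for the particular approximating measures (finite self-similar measures under the SSC). With that hypothesis added, either your compactness argument or a corrected version of the paper's $r\to 0$ argument can be pushed through; without it, neither can.
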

\begin{proof}
Let $n\geq 1$ be fixed and $\ep>0$ be arbitrary. Then, for $\mu\in \C P(X)$ there exists $\gn_\mu \in \C D_n$ such that $e_{n, r}(\mu)>\rho_r(\mu, \gn_\mu)-\ep$, where $\C  D_n$ has the same meaning as given in Lemma~\ref{lem23}. Clearly, $e_{n, r}(\mu_N)\leq \rho_r(\mu_N, \gn_\mu)$. Since $\rho_r$ is a metric, we have \[e_{n, r}(\mu_N)-e_{n, r}(\mu)< \rho_r(\mu_N, \gn_\mu)-\rho_r(\mu, \gn_\mu)+\ep\leq \rho_r(\mu_N, \mu)+\ep.\]
Since $\ep>0$ is arbitrary, it implies
$e_{n, r}(\mu_N)-e_{n, r}(\mu)\leq \rho_r(\mu_N, \mu).$
Similarly, we can show that
$e_{n, r}(\mu)-e_{n, r}(\mu_N)\leq \rho_r(\mu, \mu_N).$
Thus we deduce that
\[|e_{n, r}(\mu_N)-e_{n, r}(\mu)|\leq \rho_r(\mu_N, \mu).\]
Since $\rho_r(\mu_N, \mu) \to 0$ as $N\to \infty$, the first part of lemma is yielded. Further, using the fact that $\rho_r(\mu_N, \mu)\le \rho_1(\mu_N, \mu)$ for $0< r< 1$, we get
\[|e_{n, r}(\mu_N)-e_{n, r}(\mu)|\leq \rho_1(\mu_N, \mu).\]
Take $r\to 0 $ in the above expression, and then \cite[Lemma~3.5(b)]{GL2} produces
\[|e_{n}(\mu_N)-e_{n}(\mu)|\leq \rho_1(\mu_N, \mu),\]
which completes the proof of the lemma.
\end{proof}

\begin{remark}\label{lemma002}
Let $(\mu_N)$ be a sequence in $\mathcal{P}(X),$ and let $0<r<+\infty.$ Assume that $(\mu_N) $ converges to $\mu \in \mathcal{P}(X)$ with respect to the $L_r$-minimal metric. Then, using Lemma \ref{use234} and Note~\ref{note1}, we have $e_{n,r}(\mu_N) \to e_{n,r}(\mu)$ as $ N \to \infty.$
\end{remark}
Now, we give the following propositions.

\begin{prop} \label{prop112}
Let the self-similar mappings $\set{S_1, S_2, \cdots}$ satisfy the strong separation condition, and let $D=\te{dim}_{\te{H}}^\ast(\mu)$, where $\mu$ is the infinite self-similar measure. Then,
\[\ul D(\mu)\geq D.\]
\end{prop}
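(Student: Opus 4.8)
The plan is to invoke Proposition~\ref{prop1}(a): it suffices to show that for every $s$ with $0\le s<D$ one has $\log n+s\,\hat e_n(\mu)\to+\infty$ as $n\to\infty$. Indeed, this forces $\log n/(-\hat e_n(\mu))>s$ for all large $n$, whence $\ul D(\mu)\ge s$, and letting $s\uparrow D$ gives $\ul D(\mu)\ge D$. Writing $-\hat e_n(\mu)=\sup_{\te{Card}(\ga)\le n}\int\log\frac1{d(x,\ga)}\,d\mu(x)$, the goal is therefore the \tit{lower} error estimate $\hat e_n(\mu)\ge-\tfrac1{\tl D}\log n-O(1)$ for some $\tl D>s$: no $n$-point set $\ga$ may make $\int\log d(x,\ga)\,d\mu$ too negative.

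To obtain such a bound I would decompose the error over cylinders cut off by a stopping rule. For a scale $\ep>0$ set $\gG_\ep=\setm{\go\in\D N^{fin}}{s_\go<\ep\le s_{\go^-}}$; since $S_\go$ scales log-distance additively, $\log d(S_\go y,\ga)=\log s_\go+\log d(y,S_\go^{-1}\ga)$, and the cylinders $S_\go(J)$, $\go\in\gG_\ep$, are pairwise disjoint under the SSC, the error splits into per-cylinder contributions, each bounded below by $p_\go\bigl(\log s_\go+\hat e_{m_\go}(\mu)\bigr)$, where $m_\go$ is the number of points of $\ga$ serving $S_\go(J)$ and $\sum_\go m_\go\lesssim n$. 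Choosing $\ep$ so that $\te{Card}(\gG_\ep)\approx n$, the leading term is $\sum_{\go\in\gG_\ep}p_\go\log s_\go$, which I would bound via Lemma~\ref{lemma001} by $\tfrac1{\tl D}\sum_{\go\in\gG_\ep}p_\go\log p_\go\approx-\tfrac1{\tl D}\log n$, the remaining point-allocation terms $\sum_\go p_\go\hat e_{m_\go}(\mu)$ being absorbed by a convexity estimate since $\sum_\go m_\go\lesssim n$; this is exactly the desired bound with $\tl D$ in the role of the dimension.

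The step I expect to dominate the difficulty is that Lemma~\ref{lemma001}, and the antichain machinery generally, is available only for \tit{finite} systems and \tit{finite} maximal antichains, whereas $\gG_\ep$ is infinite and, worse, $\gn$-almost every infinite word eventually uses arbitrarily large letters, so no finite sub-alphabet $\set{1,\dots,M}$ carries positive measure. Thus I cannot simply truncate: the tail $\UU_{j>M}S_j(X)$ and the accumulation of cylinders at the boundary at infinity $\C S(\infty)$ must be controlled directly, and under the infinite SSC there is neither a uniform separation gap $\inf_{i\neq j}d(S_i(J),S_j(J))>0$ nor uniform upper $D$-regularity of $\mu$ to fall back on.

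To close the argument I would combine two ingredients. First, a tail estimate showing that the cylinders containing a large letter contribute only $O(1)$ to $\int\log\frac1{d(x,\ga)}\,d\mu$, drawing on the convergence of $\sum_j p_j\log s_j$ and on $0<\inf_j s_j$ (Section~\ref{sec2}) to bound the relevant logarithmic moments and, via Lemma~\ref{lemma4555} and Remark~\ref{rem11}, to absorb the tail into finitely many covering pieces. Second, an approximation in which the finite subsystem $\set{S_1,\dots,S_M}$ with renormalized weights has Hausdorff dimension $\tl D_M\to D$ as $M\to\infty$; applying Lemma~\ref{lemma001} on its finite maximal antichains keeps the estimate dimension-sharp, and together with the continuity of $e_n$ from Lemma~\ref{use234} lets me transfer the finite bound to $\mu$. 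The genuinely delicate point is to make the passage $M\to\infty$ uniform in $n$, i.e.\ to interchange the liminf in $n$ defining $\ul D(\mu)$ with the limit in $M$; securing this uniformity is where the main work lies. Once it is in place, letting $n\to\infty$ and then $s\uparrow D$ yields $\ul D(\mu)\ge D$.
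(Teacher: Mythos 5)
Your overall skeleton---truncate to the finite subsystem $\set{S_1,\dots,S_N}$ with renormalized weights, note that its Hausdorff (and quantization) dimension $t_N$ tends to $D$, and transfer information back to $\mu$ via the continuity of $\hat e_n$ under weak convergence (Lemma~\ref{use234})---is the paper's skeleton as well, but your argument has a genuine gap at exactly the point you flag yourself: you never obtain the uniformity in $n$ needed to pass from the finite subsystems to $\mu$, and you explicitly defer it (``securing this uniformity is where the main work lies''). The paper closes this gap not by interchanging limits but by importing the \emph{quantitative} form of the finite-alphabet result: by \cite[Lemma~5.10]{GL2} and the proof of \cite[Theorem~5.11]{GL2}, the truncated measure $\mu_N$ satisfies $\hat e_n(\mu_N)\geq c_N-\frac{1}{t_N}\log n$ for \emph{all} $n\in\D N$, where $c_N=\min\set{\frac{1}{t_N}\log n+\hat e_n(\mu_N) : n\leq n_0}$ is a minimum over finitely many indices and hence independent of $n$. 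Since $\mu_N\to\mu$ weakly, Lemma~\ref{use234} gives $\hat e_n(\mu_N)\to\hat e_n(\mu)$ for each fixed $n$, so $c_N\to c>-\infty$ and $t_N\to D$; taking $N\to\infty$ at each fixed $n$ yields $\hat e_n(\mu)\geq c-\frac{1}{D}\log n$ for all $n$, whence $\inf_{n}\big(\log n+D\hat e_n(\mu)\big)\geq cD>-\infty$ and Proposition~\ref{prop1} finishes. The uniformity you were missing is thus supplied for free by the fact that the finite-case lower bound carries a constant independent of $n$; no infinite antichains, no tail estimate near $\C S(\infty)$, and no appeal to Lemma~\ref{lemma4555} or Remark~\ref{rem11} are needed (those enter only the upper bound, Proposition~\ref{prop113}).

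A second, independent defect is your use of Lemma~\ref{lemma001}: its conclusion $\sum_{\gs\in\gG}q_\gs\log s_\gs\leq\frac{1}{\tilde D}\sum_{\gs\in\gG}q_\gs\log q_\gs$ is an \emph{upper} bound on $\sum_{\gs}q_\gs\log s_\gs$, which is the direction that serves an upper bound on $\hat e_n(\mu)$---this is precisely how the paper uses it in Proposition~\ref{prop113} to get $\ol D(\mu)\leq D$. For your intended lower bound on $\hat e_n(\mu)$ you would need the reverse inequality, which the lemma does not provide. Likewise, the per-cylinder splitting $\hat e_n(\mu)\geq\sum_{\go}p_\go\big(\log s_\go+\hat e_{m_\go}(\mu)\big)$ with $\sum_\go m_\go\lesssim n$ is itself unproved: assigning to each cylinder the points of $\ga$ that serve it is, in the finite case, the hard content of \cite[Lemma~5.10]{GL2} (requiring SSC and compactness), and, as you observe, it does not carry over directly to the infinite alphabet, where $\gG_\ep$ is infinite and there is no uniform separation gap. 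So the novel parts of your plan do not work as stated, and the part that would work is the one you left open; the missing idea is the $n$-uniform ``firewall'' inequality for the truncated measures described above.
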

\begin{proof}
For any $N\geq 2$, let $L_N=\sum_{j=1}^N p_j$. Let us now consider the sequence $\set{t_N}_{N\geq 2}$, where
\[t_N=\frac{\sum_{j=1}^N \frac {p_j}{L_N} \log \frac{p_j}{L_N}}{\sum_{j=1}^N \frac {p_j}{L_N} \log s_j}.\]
Let $\mu_N$ be the self-similar measure generated by the finite system of contractive similarity mappings $S_1, S_2, \cdots, S_N$ associated with the probability vector $(\tilde p_1, \tilde p_2, \cdots, \tilde p_N)$, where
$\tilde p_i =\frac{p_i}{L_N} \te{ for } 1 \leq i \leq   N$.
Under the strong separation condition, we know that
\[\te{dim}_{\te{H}}^\ast(\mu_N)=\frac{\sum_{i=1}^N \tilde p_i\log \tilde p_i}{\sum_{i=1}^N \tilde p_i\log s_i}=t_N,\]
where $\te{dim}_{\te{H}}^\ast(\mu_N)$ is the Hausdorff dimension $t_N$ of the self-similar measure $\mu_N$ (see \cite {GH}).
Note that $\lim_{N\to\infty} t_N= D$, where $D=\te{dim}_{\te{H}}^\ast(\mu)$ is the Hausdorff dimension of the infinite self-similar measure $\mu$.
By \cite{GL2}, under the strong separation condition, we know that the quantization dimension of the self-similar measure $\mu_N$ with respect to the geometric mean error exists and equals the Hausdorff dimension $t_N$ of the measure $\mu_N$. With respect to the $L_r$-minimal metric, it is known that $\set{\mu_N}_{N\geq 2}$ tends to the probability measure $\mu$ as $N\to \infty$ (see \cite[Theorem 3]{N}). Thus, for any discrete $\ga \sci \D R^k$, the function $\log d(x, \ga)$ being continuous for $\mu$ almost every $x\in X$, we have
\[\int \log d(x, \ga)d\mu_N \to \int \log d(x, \ga) d\mu \te{ as } N\to \infty.\]
Using the above fact and Lemma \ref{use234}, it follows that for any $n\in \D N$, $\hat e_n(\mu_N) \to \hat e_n(\mu)$ as $ N \to \infty$.
By \cite[Lemma~5.10]{GL2}, we know that for the probability measure $\mu_N$ there is an $n_0\in \D N$ such that
\[\hat e_n(\mu_N)=\sum_{i=1}^N \tilde p_i\log s_i + \min\Big\{\sum_{i=1}^N \hat p_i \hat e_{n_i}(\mu_N) : n_i\geq 1, \, \sum_{i=1}^N n_i\leq n\Big\}\]
for all $n\geq n_0$.
Let $c_N=\min \set{\frac 1 {t_N}\log n+\hat e_n(\mu_N) : n\leq n_0}$ and $c=\min \set{\frac 1 D \log n+\hat e_n(\mu) : n\leq n_0}$. Since both $\hat e_n(\mu_N)$ and $\hat e_n(\mu) >-\infty$ for all $n\in \D N$, we have $c$ and $c_N>-\infty$. Moreover, as $\hat e_n(\mu_N) \to \hat e_n(\mu)$,  we have $c_N\to c$ whenever $N\to \infty$. As shown in the proof of Theorem~5.11 in \cite{GL2}, we have
\[\hat e_n(\mu_N) \geq c_N-\frac{1}{t_N} \log n,\]
for all $n\in \D N$. Now taking $N\to \infty$, we have
\[\hat e_n(\mu) \geq c-\frac{1}{D} \log n.\]
This implies
\[\inf_{n\in \D N} \Big (\log n+ D \hat e_n(\mu)\Big) \geq c D>-\infty,\]
which by Proposition~\ref{prop1} yields that $\ul D(\mu)\geq D$, and hence the proposition.
\end{proof}

\begin{prop} \label{prop113} Let the infinite system of self-similar mappings $S_j$ with similarity ratios $s_j$, $j\in\D N$, associated with the probability vector $(p_1, p_2,  \cdots)$ satisfy the strong open set condition, and let the series $\sum_{j=1}^\infty p_j \log s_j$ converge. Then, $\ol D(\mu) \leq D$, where $D$ is the Hausdorff dimension $\te{dim}_{\te{H}}^\ast(\mu)$ of the infinite self-similar measure $\mu$.
\end{prop}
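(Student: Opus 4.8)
The plan is to reduce to a codebook construction via Proposition~\ref{prop1}(b). It suffices to show that for every $s>D$,
\[\lim_{n\to\infty}\big(\log n+s\,\hat e_n(\mu)\big)=-\infty,\]
since then $\limsup_n(\log n+s\,\hat e_n)\neq+\infty$ forces $s\ge\overline{D}(\mu)$, and letting $s\downarrow D$ gives $\overline{D}(\mu)\le D$. Thus I will construct, for scales $\delta\downarrow 0$, finite codebooks $\gamma_\delta$ of size $n=n(\delta)$ for which $\log n\le\log(1/\delta)(1+o(1))$ while $\hat e_n(\mu)\le \tfrac1{t'}\log\delta+O(1)$ for some $t'=t'(N)\to D$; monotonicity of $n\mapsto\hat e_n$ then upgrades the estimate along the sequence $n(\delta)$ to all $n$.

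The codebook is built from the measure-adapted stopping antichain $\Gamma_\delta=\{\sigma : p_\sigma\le\delta<p_{\sigma^-}\}$, where $\sigma^-$ deletes the last letter. The essential difficulty, flagged in the introduction, is that $\Gamma_\delta$ is \emph{infinite}: each non-stopped prefix $\tau$ (with $p_\tau>\delta$) has infinitely many stopping extensions $\tau j$ with $j$ large. Here Remark~\ref{rem11} is decisive: since $S_j(X)\subseteq T_1(X)$ for all $j>N$, every such extension satisfies $S_{\tau j}(X)\subseteq S_\tau(T_1(X))$, so I represent the whole family $\{\tau j : j>N\}$ by a single point of $S_\tau(T_1(X))$, while the finitely many extensions by $j\le N$ each receive their own point. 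With $p_{\max}:=\sup_j p_j<1$, the number of words with $p_\tau>\delta$ is at most $\delta^{-1}\log(1/\delta)/|\log p_{\max}|$, so $\gamma_\delta$ is finite and $\log n\le\log(1/\delta)(1+o(1))$.

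For the error I bound $d(x,\gamma_\delta)$ by the diameter of the cell containing $x$ and split off the grouped cells. Comparing the grouped contribution with the idealized one (one point per cell $\tau j$, of diameter $s_{\tau j}\,\text{diam}(X)$), the constant and the $\log s_\tau$ terms cancel, leaving an excess bounded by $R_N\sum_{\tau}p_\tau$, where $R_N:=\sum_{j>N}p_j|\log s_j|$. Because each letter multiplies $p$ by at most $p_{\max}$, the stopping time $\tau_\delta$ is bounded, whence $\sum_{\tau}p_\tau=\mathbb E_\nu[\tau_\delta]\le \log(1/\delta)/|\log p_{\max}|+1$; and the hypothesis that $\sum_j p_j\log s_j$ converges makes $R_N\to0$. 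Hence the excess is at most $\varepsilon_N\log(1/\delta)+O(1)$ with $\varepsilon_N\to0$, and
\[\hat e_n(\mu)\le\sum_{\sigma\in\Gamma_\delta}p_\sigma\log s_\sigma+\varepsilon_N\log(1/\delta)+O(1).\]

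The heart of the proof is the main term $\sum_{\Gamma_\delta}p_\sigma\log s_\sigma$ over the infinite antichain, where the finite-antichain estimate of Lemma~\ref{lemma001} is unavailable. I would settle it by an optional-stopping identity: put $g_j:=\tfrac1D\log p_j-\log s_j$; the defining relation $D=\frac{\sum_j p_j\log p_j}{\sum_j p_j\log s_j}$ gives $\sum_j p_j g_j=0$, so the i.i.d.\ increments $g_{\omega_i}$ have mean zero under $\nu$, and since $\tau_\delta$ is a bounded stopping time,
\[\sum_{\sigma\in\Gamma_\delta}p_\sigma\log s_\sigma=\frac1D\sum_{\sigma\in\Gamma_\delta}p_\sigma\log p_\sigma\le\frac1D\log\delta,\]
the last step because $p_\sigma\le\delta$ and $\sum_{\Gamma_\delta}p_\sigma=1$. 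Combining, $\hat e_n(\mu)\le(\tfrac1D-\varepsilon_N)\log\delta+O(1)=:\tfrac1{t'}\log\delta+O(1)$ with $t'\to D$, and with $\log n\le\log(1/\delta)(1+o(1))$ this yields $\log n+s\,\hat e_n\to-\infty$ for every $s>t'$; sending $N\to\infty$ finishes via Proposition~\ref{prop1}(b). The main obstacle is exactly the infinite antichain $\Gamma_\delta$, and the two devices that overcome it are the mean-zero optional-stopping identity (which replaces Lemma~\ref{lemma001} and needs no finite maximality) and the $T_1$-grouping of Remark~\ref{rem11}, whose error is held to lower order precisely by the convergence of $\sum_j p_j\log s_j$.
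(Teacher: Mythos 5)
Your proof is correct, and it takes a genuinely different route from the paper's, although both share the same skeleton: an explicit codebook indexed by an antichain, followed by an appeal to Proposition~\ref{prop1}(b), and both rely on Remark~\ref{rem11}. The difference is where the map $T_1$ and the finite-system theory enter. The paper uses $T_1$ \emph{globally}: it collapses the whole tail $\{S_j : j>N\}$ into the single map $\tilde S_{N+1}=T_1$ carrying the aggregated mass $\tilde p_{N+1}=\sum_{j>N}p_j$, works with a \emph{finite} maximal antichain $F_n\subset\{1,\dots,N+1\}^*$ of the resulting auxiliary finite system, replaces your stopping-time identity by the inductive inequality of Lemma~\ref{lemma001} (which in turn rests on the finite-IFS dimension formula of \cite{GH}), and concludes $\overline D(\mu)\le \tilde D_N\to D$, where $\tilde D_N$ is the Hausdorff dimension of the auxiliary finite self-similar measure. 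You instead stay in the infinite alphabet: your antichain $\Gamma_\delta$ is infinite, finiteness of the codebook is recovered by grouping the tail children $\tau j$, $j>N$, of each unstopped prefix $\tau$ into a single point of $S_\tau(T_1(X))$, and the entropy--contraction comparison is the exact optional-stopping (Wald) identity $\sum_{\Gamma_\delta}p_\sigma\log s_\sigma=\frac1D\sum_{\Gamma_\delta}p_\sigma\log p_\sigma$, which is legitimate here because $\tau_\delta$ is a bounded stopping time and both $\sum_j p_j\log p_j$ and $\sum_j p_j\log s_j$ converge absolutely (their terms have one sign). Each route buys something. The paper's argument is purely combinatorial, needs no probabilistic apparatus, and delegates all dimension computations to the finite theory. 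Yours is quantitative --- it gives $\overline D(\mu)\le D/(1-D\varepsilon_N)$ with $\varepsilon_N$ proportional to the tail $\sum_{j>N}p_j|\log s_j|$, so the standing hypothesis enters exactly where it is needed --- and it bypasses Lemma~\ref{lemma001} entirely. Moreover, your ``last-letter'' grouping uses only the automatic inclusion $S_{\tau j}(X)=S_\tau(S_j(X))\subseteq S_\tau(T_1(X))$, whereas the paper's global substitution implicitly requires the pieces of $\mu$ to nest inside auxiliary cells $\tilde S_\omega(X)$ even when the letter $N+1$ occupies a non-final position of $\omega$, where $S_j(X)\subseteq T_1(X)$ alone does not yield $S_j(S_i(X))\subseteq T_1(S_i(X))$; your construction sidesteps this delicate point. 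The steps you leave schematic are indeed routine: $p_{\max}=\sup_j p_j\le\max\{p_1,1-p_1\}<1$, and the monotonicity upgrade from the subsequence $n(\delta)$ to all $n$ works because passing from $\delta$ to $\delta/2$ changes $\log n(\delta)$ by only $O(1)$.
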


\begin{proof}
 By Remark~\ref{rem11}, there exists a positive integer $N$ such that $S_j(X)$ are contained in $T_1(X)$ for all $j \geq  N+1.$
Let us now define the finite system of contractive similarities $\tilde S_i$ for $1 \leq i \leq N+1$, such that $\tilde S_i = S_i$ for $1 \le i \le N$ and $\tilde S_{N+1} = T_1$.
Let
\[\tilde p_i = p_i \te{ for } 1 \leq i \leq   N, \te{ and }  \tilde p_{N+1} = \mathop{\sum}\limits_{i >N} p_i.\]
Further, let $\tilde s_i$ be the contraction ratio of $\tilde S_i$, for $ 1 \le i \le N+1$.
Now from the self-similarity condition of the measure $\mu$, we have the decomposition
\begin{equation}\label{dec}
\mu = \mathop{\sum}\limits_{j \ge 1} p_j \mu\circ S_j^{-1} = \mathop{\sum}\limits_{j=1}^N p_j \mu\circ S_j^{-1} + \mathop{\sum}\limits_{j >N} p_j \mu\circ S_j^{-1}.
\end{equation}
By $\set{1, 2, \cdots, N+1}^\ast$ we denote the set of all sequences of finite lengths over the alphabet $\set{1, 2, \cdots, N+1}^\ast$.
For  $\omega = (\omega_1, \cdots, \omega_k) \in \{1, \cdots, N+1\}^*, \ k\ge 1$, set $\tilde p_\omega:= \tilde p_{\go_1} \cdots \tilde p_{\omega_k}$. Let $\omega^- = (\omega_1, \cdots, \omega_{k-1})$ be the truncation of $\omega$ obtained by cutting the last letter of $\go$.
Let $\tilde p_{\min}=\min\set{\tilde p_j: 1\leq j\leq N+1}$. Consider an arbitrary integer $n\in \D N$, such that $\frac 1 n <\tilde p_{\min}^{2}$.  Write $\ep_n=\frac 1 n \tilde p_{\min}^{-1}$.
Then $0<\ep_n<1$, write
\[
F_n =\set{\go \in \set{1, 2, \cdots, N+1}^\ast : \tilde p_{\go^-} \geq \ep_n>\tilde p_\go}.
\]
Then, $F_n$ is a finite maximal antichain.
Note that if $\go \in F_n$, then $\tilde p_\omega \geq \frac 1n$. Thus, we have
\[1=\mathop{\sum}\limits_{\omega \in F_n} \tilde p_\omega  \ge \te{Card}(F_n) \cdot \frac{1}{n},\]
which implies $\te{Card}(F_n)\leq n$.
Let $B$ be a set of cardinality $n$, which has points in each of the sets $\tilde S_\omega(X)$ for $\omega \in F_n$; this is possible since, we have seen $\te{Card}(F_n) \le n$. Moreover, the support of $\mu$ is compact, and so there exists a constant $C>0$, such that
\begin{align*}
\hat e_n(\mu)\leq \int \log d(x, B) d\mu\leq \sum_{\go\in F_n} \tilde p_\go\int \log d(x, B) d(\mu\circ \tilde S_\go^{-1})\leq \sum_{\go\in F_n}\tilde p_\go \log \tilde s_\go +C.
\end{align*}
Let $\tilde D_N$ be the Hausdorff dimension of the self-similar measure generated by the mappings $\set{\tilde S_1, \cdots, \tilde S_{N+1}}$ with contractive ratios $\set{\tilde s_1, \cdots, \tilde s_{N+1}}$ and the probability vector $(\tilde p_1,\cdots, \tilde p_{N+1})$. Then, by Lemma~\ref{lemma001}, we have
\begin{align*}
\hat e_n(\mu) \le \frac 1{\tilde D_N} \sum_{\go \in F_n}\tilde p_{\go} \log \tilde p_{\go} +C,
\end{align*}
which is true for all but finitely many $n\in \D N$. Since $\ep_n=\frac 1 n \tilde p_{\min}^{-1}$, we have
\begin{equation} \label{eq345}
\log n+\tilde D_N \hat e_n(\mu) \leq \sum_{\go\in F_n}\tilde p_{\go} \log \tilde p_{\go}-\log \ep_n-\log\tilde p_{\min} +C\tilde D_N,
\end{equation}
for all but finitely many $n\in \D N$.
Since $\tilde p_{\go} < \ep_n$ for all $\go \in F_n$, we have $\log \ep_n \geq \log\tilde p_{\go}$, and hence,
\[
\sum_{\go \in F_n} \tilde p_{\go} \log \tilde p_{\go} \leq \log \ep_n,
\]
which yields
\[
\limsup_{n \to \infty} \Big (\sum_{\go \in F_n} \tilde p_{\go} \log \tilde p_{\go}-\log \ep_n\Big )\leq 0.
\]
Thus, \eqref{eq345} implies that
\[
\limsup_{n\to \infty}\Big(\log n+\tilde D_N \hat e_n(\mu)\Big) \leq -\log\tilde p_{\min} +C\tilde D_N <+\infty,
\]
which yields $\ol D(\mu) \leq \tilde D_N$. Given $\sum_{j=1}^\infty p_j \log s_j$ converges, and so by \cite[Lemma~3.1]{MR1}, $\sum_{j=1}^\infty p_j \log p_j$ converges, which implies
\[
 \lim_{N\to \infty}\frac{\sum_{j=1}^N \tilde p_j \log \tilde p_j}{\sum_{j=1}^N \tilde p_j \log \tilde s_j}=\frac {\sum_{j=1}^\infty p_j \log p_j}{\sum_{j=1}^\infty p_j \log s_j}
\]
exists and equals $D$.
Thus, since $\ol D(\mu) \leq \tilde D_N \leq \frac{\sum_{j=1}^N \tilde p_j \log \tilde p_j}{\sum_{j=1}^N \tilde p_j \log \tilde s_j}$ holds for arbitrarily large $N$, taking $N\to \infty$ we have $ \ol D(\mu) \leq D$. Hence the proposition.
\end{proof}

\subsection*{Proof of Theorem~\ref{main}} Recall Proposition~\ref{prop1}. Then, Proposition~\ref{prop112} tells us that $\ul D(\mu)\geq D$, and Proposition~\ref{prop113} implies that  $\ol D(\mu) \leq D$. Since, $D\leq \ul D(\mu)\leq \ol D(\mu)\leq D$, the proof of Theorem~\ref{main} is complete.
\qed

\section{Stability of quantization dimension} \label{sec4}
In this section, our aim is to study the stability of quantization dimensions for the infinite systems with respect to the geometric mean error.
Let us start this section with the following example, which conveys the reader that quantization dimension is not continuous in general.
\begin{example}
Let $\delta_t$ be the Dirac measure at $t \in \mathbb{R}$, and $\mathcal{L}^1$ denote the one-dimensional Lebesgue measure on $\mathbb{R}.$
Define a sequence of measures $(\mu_n)$ as follows: $\mu_n= \frac{1}{n} \sum_{i=1}^n \delta_{i/n}.$ From the very construction of $\mu_n$, it follows that $\mu_n \to \mathcal{L}^1|_{[0,1]}.$ However, we prove that $D(\mu_n)$ does not converge to $D\big(\mathcal{L}^1|_{[0,1]}\big).$ Note that $\mu =\mathcal{L}^1|_{[0,1]},$ and hence $D\big(\mathcal{L}^1|_{[0,1]}\big)=1.$ It is simple to show that $D(\mu_n)=0$ for all $n\in \D N.$ Therefore, $D(\mu_n)$ does not converge to $D(\mu)$.
\end{example}
In the sequel, Euclidean norm on $\mathbb{R}^k$ is denoted by $\|\cdot\|$, and if $S: X \to X$ is a mapping, then $\|S\|$ represents the supremum norm on $X$, i.e.,  $\|S\|:=\sup_{x\in X}\|S(x)\|$.

\begin{lemma} \label{lemma45}
Let $\mu_n$ be the Borel probability measure generated by the infinite IFS give by $\{X; S_{n, j},~ j \in \mathbb{N}\}$ associated with the probability vector $(p_{n,1}, p_{n, 2}, \cdots )$. Let $\mu$ be the Borel probability measure generated by the IFS given by $\{X; S_{j}, ~ j \in \mathbb{N}\}$ associated with the probability vector $(p_{1}, p_{2}, \cdots)$, i.e.,
\[\mu_n=\sum_{j=1}^{\infty} p_{n, j}\mu_n\circ S_{n, j}^{-1}, \te{ and } \mu=\sum_{j=1}^{\infty} p_j \mu\circ S_j^{-1}.\]
Assume that there exists $x_0 \in X$ such that $\sup_{j \in \mathbb{N}} \|S_j(x_0)\|< \infty$,  $\sum_{j=1}^{\infty}\|S_{n, j} - S_j \|\to 0 $, and $\sum_{j=1}^{\infty}|p_{n, j} - p_j| \to 0$ as $n\to \infty$.
Then, $\mu_n\to \mu$ as $n\to \infty$.
\end{lemma}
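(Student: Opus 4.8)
The plan is to establish weak convergence $\mu_n \to \mu$ by showing that the metric distance $d_H(\mu_n, \mu)$ tends to zero, exploiting the metric structure described in Note~\ref{note1} where the $d_H$-topology and the weak topology coincide on compactly supported measures. Since all measures live on the common compact set $X$, it suffices to prove $d_H(\mu_n,\mu)\to 0$, i.e. to bound $\big|\int_X f\,d\mu_n - \int_X f\,d\mu\big|$ uniformly over all $1$-Lipschitz functions $f$.

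First I would rewrite the difference $\mu_n - \mu$ using the two self-similarity identities. Writing
\[
\mu_n - \mu = \sum_{j=1}^\infty p_{n,j}\,\mu_n\circ S_{n,j}^{-1} - \sum_{j=1}^\infty p_j\,\mu\circ S_j^{-1},
\]
I would insert and subtract a hybrid term to split the discrepancy into two pieces: one controlled by the coefficient differences $|p_{n,j}-p_j|$, and one controlled by the map differences $\|S_{n,j}-S_j\|$ together with $d_H(\mu_n,\mu)$ itself. Concretely, for a $1$-Lipschitz $f$ one estimates
\[
\Big|\int f\,d\mu_n - \int f\,d\mu\Big| \le \sum_{j=1}^\infty |p_{n,j}-p_j|\,\|f\|_\infty + \sum_{j=1}^\infty p_j \Big|\int f\circ S_{n,j}\,d\mu_n - \int f\circ S_j\,d\mu\Big|,
\]
and the inner difference is further split so that the term $\int (f\circ S_{n,j} - f\circ S_j)\,d\mu_n$ is bounded by $\|S_{n,j}-S_j\|$ (using $\mathrm{Lip}(f)\le 1$), while the residual term $\int f\circ S_j\,d\mu_n - \int f\circ S_j\,d\mu$ is bounded by $d_H(\mu_n,\mu)$ since $f\circ S_j$ is $s_j$-Lipschitz with $s_j\le s<1$. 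I would use the hypothesis $\sup_{j}\|S_j(x_0)\|<\infty$ together with compactness of $X$ to control $\|f\|_\infty$ uniformly (after recentering $f$, which does not change its action as a test function).

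Collecting terms yields a self-referential inequality of the form
\[
d_H(\mu_n,\mu) \le \sum_{j=1}^\infty |p_{n,j}-p_j|\cdot C_1 + \sum_{j=1}^\infty \|S_{n,j}-S_j\| + s\cdot d_H(\mu_n,\mu),
\]
where the contraction factor $s = \sup_j s_j < 1$ appears in front of the trailing $d_H(\mu_n,\mu)$ because each $f\circ S_j$ is $s_j$-Lipschitz. Since $s<1$, I can absorb the last term to the left-hand side and divide by $1-s$, obtaining
\[
d_H(\mu_n,\mu) \le \frac{1}{1-s}\Big(C_1\sum_{j=1}^\infty |p_{n,j}-p_j| + \sum_{j=1}^\infty \|S_{n,j}-S_j\|\Big),
\]
and both sums tend to $0$ by hypothesis, completing the proof.

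\textbf{The main obstacle} I anticipate is making the splitting rigorous at the level of summing over the infinite index set $j$ while keeping the constants uniform in $n$: one must ensure the interchange of the infinite sum with the supremum over Lipschitz $f$ is justified and that the bound on $\|f\|_\infty$ does not secretly depend on $j$ or $n$. The hypothesis $\sup_j\|S_j(x_0)\|<\infty$ is precisely what guarantees the images $S_{n,j}(X)$ stay in a fixed bounded region uniformly (once $\sum_j\|S_{n,j}-S_j\|\to 0$ forces $\sup_{n,j}\|S_{n,j}(x_0)\|<\infty$ for large $n$), so the test functions can be taken bounded by a single constant $C_1$; verifying this uniform boundedness carefully is the delicate point, whereas the contraction-absorption step is then routine.
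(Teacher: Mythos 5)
Your proposal is correct, and it follows the same overall skeleton as the paper's proof: bound $d_H(\mu_n,\mu)=\sup_{\mathrm{Lip}(g)\le 1}\big|\int g\,d\mu_n-\int g\,d\mu\big|$ by inserting the two invariance identities and splitting the discrepancy into a piece controlled by $\sum_j|p_{n,j}-p_j|$ and a piece controlled by $\sum_j\|S_{n,j}-S_j\|$. But it differs at the decisive step, and in a way that works in your favor. The paper asserts the identity
\[
p_{n,j}\int (g\circ S_{n,j})\,d\mu_n-p_j\int (g\circ S_j)\,d\mu
= p_{n,j}\int\bigl((g\circ S_{n,j})-(g\circ S_j)\bigr)\,d\mu_n+(p_{n,j}-p_j)\int (g\circ S_j)\,d\mu,
\]
which is false as stated: the two sides differ by $p_{n,j}\bigl(\int (g\circ S_j)\,d\mu_n-\int (g\circ S_j)\,d\mu\bigr)$, i.e., the paper silently drops the term recording that the integrating measure itself changes from $\mu_n$ to $\mu$. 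Summed over $j$, this dropped term is exactly your trailing self-referential term, bounded by $s\,d_H(\mu_n,\mu)$ because each $g\circ S_j$ is $s_j$-Lipschitz with $s_j\le s<1$; your contraction-absorption step (subtract $s\,d_H(\mu_n,\mu)$ and divide by $1-s$, legitimate since $d_H(\mu_n,\mu)$ is finite for measures supported in the compact set $X$) is precisely what is needed to close the argument, and it is the standard Hutchinson-type contraction argument for the Monge--Kantorovich metric. So your proof is not merely an alternative: it repairs a genuine gap in the paper's own proof, while reaching the same conclusion. The remaining details you flag as delicate — recentering $g$ so that $\|g\|_\infty$ is uniformly controlled, and using $\sup_j\|S_j(x_0)\|<\infty$ together with compactness of $X$ so that all images $S_{n,j}(X)$, $S_j(X)$ lie in a fixed bounded set — are handled in the paper in essentially the same way (its constant $M_*$), and the interchange of the supremum with the infinite sum via $\sup\sum\le\sum\sup$ is common to both arguments.
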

\begin{proof}
We have
\begin{equation*}
\begin{aligned}
& d_H(\mu_n, \mu) =\sup_{\te{Lip}(g)\leq 1} \Big|\int g d\mu_n -\int g d\mu\Big|\\
&=\sup_{\te{Lip}(g)\leq 1} \Big| \sum_{j=1}^{\infty} p_{n, j}\int (g\circ S_{n, j}) d\mu_n- \sum_{j=1}^{\infty}  p_j \int (g\circ S_j) d\mu\Big|\\
&\leq \sum_{j=1}^{\infty} \sup_{\te{Lip}(g)\leq 1} \Big| p_{n, j}\int (g\circ S_{n, j}) d\mu_n-p_j \int (g\circ S_j) d\mu\Big|\\
&=\sum_{j=1}^{\infty} \sup_{\te{Lip}(g)\leq 1} \Big| p_{n, j}\int ((g\circ S_{n, j})-(g\circ S_j)) d\mu_n+(p_{n, j}-p_j) \int (g\circ S_j) d\mu\Big|\\
&\leq \sum_{j=1}^{\infty} \sup_{\te{Lip}(g)\leq 1} \Big| p_{n, j}\int ((g\circ S_{n, j})-(g\circ S_j)) d\mu_n\Big|+\sum_{j=1}^{\infty} \sup_{\te{Lip}(g)\leq 1}\Big|(p_{n, j}-p_j) \int (g\circ S_j) d\mu\Big|\\
&\leq \sum_{j=1}^{\infty} \sup_{\te{Lip}(g)\leq 1} p_{n, j}\int \|(g\circ S_{n, j})-(g\circ S_j)\| d\mu_n+\sum_{j=1}^{\infty} \sup_{\te{Lip}(g)\leq 1}|(p_{n, j}-p_j|  \int\|g\circ S_j\| d\mu.
\end{aligned}
\end{equation*}
Since for all $ j \in \mathbb{N}$, $ S_j$ are contraction mappings, we get
 \begin{align*} \sup_{x \in X }\|S_j(x)\|&\le \sup_{x \in X }\|S_j(x)-S_j(x_0)\|+\|S_j(x_0)\| \le s_j \sup_{x \in X} d(x,x_0)+\|S_j(x_0)\|\\
 &\le s_j \text{diam}(X)+\|S_j(x_0)\|.
 \end{align*}
Further, using the fact that $\sup_{j\in \mathbb{N}} s_j  < 1,$ and the assumption of pointwise boundedness of $\{S_{j}, ~ j \in \mathbb{N}\}$, we choose a constant $M_*>0$, such that $\|g\circ S_j\|\leq M_*$ for all $ j \in \mathbb{N}$.
Since $\sum_{j=1}^{\infty}\|S_{n, j} - S_j \|\to 0 $, and $\sum_{j=1}^{\infty}|p_{n, j} - p_j| \to 0$, for any given $\ep>0$, as $\frac \ep{M_*2^{j+1}}>0$, there exists a positive integer $\tilde N(\ep)$, such that for all $n\geq \tilde N(\ep)$, we have
\[
\|S_{n, j}-S_j\|<\frac{\ep}{2^{j+1}}, \te{ and } |p_{n, j}-p_j|<\frac{\ep}{M_*2^{j+1}},
\]
for all $ j \in \mathbb{N}$.
Again, if $\te{Lip}(g)\leq 1$, then for all $n\geq \tilde N(\ep)$, we have $\|(g\circ S_{n, j})-(g\circ S_j)\|\leq \|S_{n, j}-S_j\|<\frac{\ep}{M_*2^{j+1}}$.
Thus, for the above $\ep$, and $\tilde N(\ep)$, if $n\geq \tilde N(\ep) $, we have
\begin{equation*}
\begin{aligned}
&d_H(\mu_n, \mu) <\sum_{j=1}^{\infty} p_{n, j}\frac{\ep}{2^{j+1}}+\sum_{j=1}^{\infty} \frac{\ep}{M_*2^{j+1}} M_*=\ep,
\end{aligned}
\end{equation*}
which completes the proof.
\end{proof}
 In the next theorem, we show the stability of the quantization dimension with respect to the geometric mean error.
\begin{theorem} \label{theoremlast}
Let $\mu_n$, and $\mu$ be the Borel probablity measures given by the IFSs as given in the above lemma, that is,
\[
\mu_n=\sum_{j=1}^{\infty} p_{n, j}\mu_n\circ S_{n, j}^{-1}, \te{ and } \mu=\sum_{j=1}^{\infty} p_j \mu\circ S_j^{-1}.
\]
Assume that all IFSs satisfy the hypotheses as in the above lemma and Theorem \ref{main}. Further, assume that $\inf_{j,n \in \mathbb{N}} \{s_{n,j},s_j\}>0$ and for each $N \in \mathbb{N}$, $\inf_{n \in \mathbb{N}, ~j\in  \set{1,2,\cdots, N}} \{p_{n,j},p_j\}>0.$
Then, $D(\mu_n) \to D(\mu)$ as $n\to \infty.$
\end{theorem}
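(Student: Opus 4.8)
The plan is to reduce the statement to the convergence of the closed-form dimension formula. By Theorem~\ref{main}, each of $\mu_n$ and $\mu$ satisfies the hypotheses guaranteeing that its quantization dimension equals its Hausdorff dimension, so that
\[
D(\mu_n)=\frac{\sum_{j=1}^\infty p_{n,j}\log p_{n,j}}{\sum_{j=1}^\infty p_{n,j}\log s_{n,j}}=:\frac{a_n}{b_n}, \qquad D(\mu)=\frac{\sum_{j=1}^\infty p_j\log p_j}{\sum_{j=1}^\infty p_j\log s_j}=:\frac{a}{b}.
\]
Thus it suffices to prove $a_n\to a$ and $b_n\to b$, together with a uniform lower bound $|b_n|\ge c>0$, whence $D(\mu_n)\to D(\mu)$ by continuity of the quotient. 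First I would record the consequences of the quantitative hypotheses: writing $s_\ast=\inf_{j,n}\{s_{n,j},s_j\}>0$, all the numbers $\log s_{n,j}$ and $\log s_j$ lie in a bounded interval bounded away from $0$; and since a two-point triangle-inequality estimate (using that a similarity scales distances by its ratio) gives $|s_{n,j}-s_j|\le \tfrac{2}{\te{diam}(X)}\|S_{n,j}-S_j\|$, the hypothesis $\sum_j\|S_{n,j}-S_j\|\to0$ yields $\sum_j|s_{n,j}-s_j|\to0$.

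Next I would dispatch the denominator, which is the routine part. Splitting
\[
|b_n-b|\le \sum_{j=1}^\infty p_{n,j}\,|\log s_{n,j}-\log s_j| + \sum_{j=1}^\infty |p_{n,j}-p_j|\,|\log s_j|,
\]
the second sum is at most $|\log s_\ast|\sum_j|p_{n,j}-p_j|\to0$, while the first is handled with the Lipschitz bound $|\log s_{n,j}-\log s_j|\le s_\ast^{-1}|s_{n,j}-s_j|$, together with $p_{n,j}\le1$ and $\sum_j|s_{n,j}-s_j|\to0$. The uniform upper bound $\sup_{n,j}s_{n,j}<1$ (which follows from $\sup_j s_j<1$ and $\sup_j|s_{n,j}-s_j|\to0$) keeps $|b_n|$ bounded below away from $0$, supplying the constant $c$.

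The heart of the matter is the numerator, i.e.\ the continuity of the entropy $-a_n=\sum_j(-p_{n,j}\log p_{n,j})$ under $\ell^1$-convergence of the probability vectors. This is genuinely delicate because entropy is only lower semicontinuous in general, and the upper-semicontinuity direction fails for arbitrary $\ell^1$-convergent sequences; here it must be forced by the geometry. I would fix $N$ and split $-a_n$ into the head $\sum_{j\le N}(-p_{n,j}\log p_{n,j})$, which converges to $\sum_{j\le N}(-p_j\log p_j)$ as $n\to\infty$ by coordinatewise convergence and continuity of $x\mapsto -x\log x$ on $[0,1]$, and the tail $R_n^N=\sum_{j>N}(-p_{n,j}\log p_{n,j})$. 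Writing $P_n=\sum_{j>N}p_{n,j}$ and decomposing into mass and conditional entropy, $R_n^N=-P_n\log P_n+P_n\,H(\hat p_n)$, I would bound $H(\hat p_n)$ by applying the Hausdorff-dimension formula to the \emph{conditional} self-similar measure carried by the tail subsystem $\{S_{n,j}:j>N\}$: that measure still satisfies the strong separation condition and has convergent $\sum\hat p_{n,j}\log s_{n,j}$, so its dimension is at most $k$, whence $H(\hat p_n)\le k\,|\log s_\ast|$ uniformly in $n$ and $N$.

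Finally I would combine this with a tightness estimate: $\ell^1$-convergence of $(p_{n,j})_j$ to $(p_j)_j$ makes the family uniformly summable in the tail, so $\sup_n P_n\to0$ as $N\to\infty$; since $-x\log x+x\,k|\log s_\ast|\to0$ as $x\to0^+$, this forces $\sup_n R_n^N\to0$ as $N\to\infty$, and likewise $\sum_{j>N}(-p_j\log p_j)\to0$. Choosing $N$ large to make both tails uniformly small and then letting $n\to\infty$ to handle the head gives $a_n\to a$, and the theorem follows. The main obstacle is exactly this uniform tail control of the entropy; the point I want to stress is that it is not a soft consequence of $\ell^1$-convergence but rather of the ambient bound $\te{dim}_{\te{H}}^\ast\le k$ combined with the uniform two-sided bounds on the contraction ratios, which is precisely where the extra hypotheses $\inf_{j,n}\{s_{n,j},s_j\}>0$ and the finite-segment positivity of the probabilities are used.
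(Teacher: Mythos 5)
Your proposal is correct, and at the decisive step it takes a genuinely different route from the paper. Both you and the paper start the same way: invoke Theorem~\ref{main} to replace $D(\mu_n)$ and $D(\mu)$ by the quotients $\sum_j p_{n,j}\log p_{n,j}\big/\sum_j p_{n,j}\log s_{n,j}$ and its limit version, and then prove convergence of numerator and denominator separately. Your denominator argument is essentially the paper's (the paper uses the triangle inequality plus the mean value theorem to get the factors $|\log s_\ast|$ and $s_\ast^{-1}$), except that you make explicit the passage from $\sum_j\|S_{n,j}-S_j\|\to 0$ to $\sum_j|s_{n,j}-s_j|\to 0$ via the two-point estimate $|s_{n,j}-s_j|\le 2\|S_{n,j}-S_j\|/\te{diam}(X)$, a step the paper leaves implicit; also, your insistence on a uniform lower bound for $|b_n|$ is not needed, since $b_n\to b\neq 0$ already suffices. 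The real divergence is in the entropy term. The paper bounds $|p_{n,j}\log p_{n,j}-p_j\log p_j|\le |p_{n,j}-p_j|+|\log\min\set{p_{n,j},p_j}|\,|p_{n,j}-p_j|$ and then argues by contradiction: if the second series did not tend to $0$ along a subsequence $(n_k)$, there would exist a \emph{fixed} window $N_\ast\le j\le N_0$, independent of $k$, carrying at least $\ep_0/2$ of it, and the finite-window hypothesis $\inf_{n,\,j\le N_0}\set{p_{n,j},p_j}>0$ would then contradict $\ell^1$-convergence. You instead split into head and tail and control the tail entropy uniformly by the geometry: the conditional vector on $\set{j>N}$ comes from a sub-IFS still satisfying the strong separation condition, so the dimension formula together with $\te{dim}_{\te{H}}^\ast\le k$ and $s_{n,j}\ge s_\ast$ gives $H(\hat p_n)\le k|\log s_\ast|$, and uniform smallness of the tail mass (compactness of a convergent sequence in $\ell^1$) finishes. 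What your route buys is precisely the uniform tail control that the paper's contradiction argument quietly presupposes: the selection of $N_\ast,N_0$ independently of $k$ is not justified by the stated hypotheses, because for probability vectors satisfying only $\ell^1$-convergence and finite-window positivity the offending mass can escape to infinity (spread a vanishing amount of mass over enormously many coordinates, inflating entropy), so entropy convergence is \emph{not} a soft consequence of those hypotheses, exactly as you stress. Your tail bound via $\te{dim}\le k$ (equivalently, Gibbs' inequality with $\sum_j s_{n,j}^k\le 1$ under the open set condition, or the paper's own citation of \cite[Lemma~3.1]{MR1}) is the ingredient that closes this hole, so your proof is not merely an alternative but is, at this point, more complete than the paper's. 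One caveat applies to both arguments equally: the standing hypotheses $\inf_{n,j}\set{s_{n,j},s_j}>0$ together with the (strong) open set condition are extremely restrictive for genuinely infinite systems, since the open set condition forces $\sum_j s_{n,j}^k\le 1$; this is where the uniformity you exploit comes from, and the reader should be aware of how strong it is.
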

\begin{proof}
In view of Lemma \ref{lemma45}, we note that $\mu_n \to \mu.$ By Theorem \ref{main},
\[
D(\mu_n)=\frac{\sum_{j=1}^\infty p_{n,j}\log p_{n,j}}{\sum_{j=1}^\infty p_{n,j}\log s_{n,j}}, \quad \te{and} \quad D(\mu)=\frac{\sum_{j=1}^\infty p_j\log p_j}{\sum_{j=1}^\infty p_j\log s_j}.
\]
Now, thanks to the triangle inequality:
\[
|p_{n,j}\log s_{n,j} - p_j\log s_j | \le |\log s_{n,j}||p_{n,j}- p_{j}|+p_{j} |\log s_{n,j} - \log s_{j}|.
\]
Consequently, using the mean-value theorem, we have
\[
\sum_{j=1}^\infty|p_{n,j}\log s_{n,j} - p_j\log s_j | \le |\log \big(\inf_{j,n \in \mathbb{N}} \{s_{n,j},s_j\}\big)\big| \sum_{j=1}^\infty |p_{n,j}- p_{j}|+ \frac{1}{\inf_{j,n \in \mathbb{N}} \{s_{n,j},s_j\}}\sum_{j=1}^\infty | s_{n,j} -  s_{j}|.
\]
Since $\sum_{j=1}^{\infty}\|S_{n, j} - S_j \|\to 0 $ and  $\sum_{j=1}^{\infty}|p_{n, j} - p_j| \to 0,$ the above expression yields
\begin{equation}\label{lasteqn3}
  \sum_{j=1}^\infty|p_{n,j}\log s_{n,j} - p_j\log s_j | \to 0, \te{ i.e., } \sum_{j=1}^\infty p_{n,j}\log s_{n,j} \to   \sum_{j=1}^\infty p_j\log s_j \te{ as } n \to \infty.
\end{equation}
Again, using the mean-value theorem, we have
\begin{equation*}
\sum_{j=1}^\infty|p_{n,j}\log p_{n,j} - p_j\log p_j | \le\sum_{j=1}^\infty |p_{n,j}- p_{j}|+ \sum_{j=1}^\infty |\log \big(\min \{p_{n,j},p_j\}\big)\big| |p_{n,j}- p_{j}|.
\end{equation*}
To prove $\sum_{j=1}^\infty|p_{n,j}\log p_{n,j} - p_j\log p_j | \to 0$ as $n \to \infty$, it suffices to show that
\[
\sum_{j=1}^\infty |\log \big(\min \{p_{n,j},p_j\}\big)\big| |p_{n,j}- p_{j}| \to 0 ~~~~\te{as}~~~ n \to \infty.
\]
 Suppose it is not true. Then, there exist a real number $\epsilon_0> 0$ and a sequence $(n_k)$ such that
\begin{equation}\label{new71}
\sum_{j=1}^\infty |\log \big(\min \{p_{n_k,j},p_j\}\big)\big| |p_{n_k,j}- p_{j}| \geq \ep_0 ~~~~~\te{for all } ~~~k \in \mathbb{N}.
\end{equation}
Hence, there exist $ N_*, N_0 \in \mathbb{N}$ with $N_\ast<N_0$  such that
\[
\big|\log \big(\min_{j\in \set{N_*,N_*+1,\cdots,N_0}} \{p_{n_k,j},p_j\}\big)\big|\sum_{j=N_*}^{N_0}  |p_{n_k,j}- p_{j}| \geq \sum_{j=N_*}^{N_0} |\log \big(\min \{p_{n_k,j},p_j\}\big)\big| |p_{n_k,j}- p_{j}| \geq \frac{\ep_0}{2},
\]
for every $k \in \mathbb{N}.$
This immediately gives
\begin{equation}\label{lasteqn1}
    \sum_{j=N_*}^{N_0}  |p_{n_k,j}- p_{j}|     \geq \frac{\ep_0}{2\big|\log\big(\inf_{n \in \mathbb{N},~j\in \set{1,2,\cdots, N_0}}\{p_{n,j},p_j\}\big)\big|}  \te{ for all } k \in \mathbb{N}.
\end{equation}
Since $\sum_{j=1}^{\infty}  |p_{n,j}- p_{j}| \to 0$ as $n \to \infty,$ for $\epsilon= \frac{\ep_0}{2\big|\log\big(\inf_{n \in \mathbb{N},~j\in \set{1,2,\cdots, N_0}}\{p_{n,j},p_j\}\big)\big|} $, there exists $N_1 \in \mathbb{N}$ such that
\[
\sum_{j=1}^{\infty}  |p_{n,j}- p_{j}| < \ep \te{ for all } n \geq N_1,
\]
which contradicts \eqref{lasteqn1}. Hence,
\begin{equation} \label{lasteqn2}
\sum_{j=1}^\infty|p_{n,j}\log p_{n,j} - p_j\log p_j | \to 0, \te{ i.e., }  \sum_{j=1}^\infty p_{n,j}\log p_{n,j} \to \sum_{j=1}^\infty p_j\log p_j \te{ as } n\to \infty
\end{equation}
Thus, by \eqref{lasteqn3} and \eqref{lasteqn2}, the proof of the theorem is complete.
\end{proof}

\begin{remark}
Let us write an example of probability vectors satisfying the condition given in Theorem~\ref{theoremlast}, that is, for each $N \in \mathbb{N}$, $\inf_{n \in \mathbb{N},~j\in \set{1,2,\cdots, N}} \{p_{n,j},p_j\}>0.$
For the following example, the condition is satisfied:
\[ p_{ n, j }= \frac{1}{ 2^j}, \te{ and } p_{j}= \frac{1}{2^j},\]
where $n, j\in \D N$.
 However, the assumption does not hold for arbitrary choices of infinite probability vectors. For example, let
 $(p_{ n, j })= \Big(\frac{1}{n+2}, \frac{1}{n+2}, t, t^2,...\Big),$ where $t$ is a positive real number such that $\sum_{j=1}^{\infty}p_{ n, j }=1,$
and let $(p_j)$ be an infinite probability vector. Then, for $N=2,$ we have
\[
\inf_{ n\in \mathbb{N},~j=1,2} \{ p_{ n , j }, p_j \} = 0.
\]
\end{remark}


\end{document}